\newtheorem{thm}{Theorem}%[section]
\newtheorem{lem}[thm]{Lemma}
\newtheorem{cor}[thm]{Corollary}
\newtheorem{prop}[thm]{Proposition}
\newtheorem{conj}[thm]{Conjecture}
\theoremstyle{definition}
\newtheorem{defn}[thm]{Definition}
\newtheorem{say}[thm]{}
\newtheorem{exmp}[thm]{Example}
\newtheorem{rem}[thm]{Remark}          
\newtheorem*{ack}{Acknowledgments}      % \renewcommand{\theack}{} 
\newtheorem{defn-thm}[thm]{Definition--Theorem}  %!!!!!!!!!!!!!!!!!!!!!!!!
\newtheorem{defn-lem}[thm]{Definition--Lemma}  %!!!!!!!!!!!!!!!!!!!!!!!!
\theoremstyle{remark}
\newtheorem{claim}[thm]{Claim}
\renewcommand{\c}[0]{{\mathbb C}}  
\renewcommand{\o}[0]{{\mathcal O}} 
\newcommand{\z}[0]{{\mathbb Z}}
\renewcommand{\r}[0]{{\mathbb R}}
\newcommand{\q}[0]{{\mathbb Q}}
\newcommand{\map}[0]{\dasharrow}
\newcommand{\qtq}[1]{\quad\mbox{#1}\quad}
\newcommand{\pic}[0]{\operatorname{Pic}}
\newcommand{\supp}[0]{\operatorname{Supp}}
\newcommand{\im}[0]{\operatorname{im}}
\newcommand{\ex}[0]{\operatorname{Ex}}    
\newcommand{\diff}[0]{\operatorname{Diff}}
\newcommand{\rdown}[1]{\lfloor{#1}\rfloor}
\newcommand{\tsum}[0]{\textstyle{\sum}}
\newcommand{\coeff}[0]{\operatorname{coeff}}
\def\into{\DOTSB\lhook\joinrel\to}
\def\loccoh#1.#2.#3.#4.{H^{#1}_{#2}(#3,#4)}
\DeclareMathAlphabet{\mathchanc}{OT1}{pzc}%
                                {m}{it}
\newtheorem*{sketch}{Sketch of the proof}
\title[Vanishing theorems for threefolds]
{Vanishing theorems for threefolds in characteristic $p>5$} 
\author{Fabio Bernasconi and J\'anos Koll\'ar} 
\subjclass[2020]{14E30, 14F17, 14G17, 14J17.}
\keywords{vanishing theorems, singularities, positive characteristic.}
\address{École Polytechnique Fédérale de Lausanne,  MA B3 515 (Bâtiment MA), 1015, Lausanne, Switzerland} 
\email{fabio.bernasconi@epfl.ch}
\address{Princeton University, Princeton NJ 08544-1000, USA}
\email{kollar@math.princeton.edu}
\begin{document}

%\hfill\today

\begin{abstract}
	We prove Grauert-Riemenschneider--type vanishing theorems for excellent dlt threefolds pairs whose closed points have perfect residue fields of positive characteristic $p>5$. Then we discuss applications to dlt singularities and to Mori fiber spaces of threefolds.
\end{abstract}
\maketitle

\section{Introduction}

The Grauert-Riemenschneider vanishing theorem says that if $g\colon Y\to X$ is a proper birational morphism over $\c$ and $Y$ is smooth then
$R^ig_*\omega_Y=0$ for $i>0$. This is known to fail in positive characteristic, even for 3-folds.  
However, Grauert-Riemenschneider vanishing holds if $Y, X$ are both regular excellent schemes by  \cite[Theorem 1]{CR15}, or if $Y$ is an excellent Cohen-Macaulay scheme and $X$ has
rational singularities by \cite[Theorem 1.4]{kov-rtl}. 

In many applications one would need a similar vanishing theorem where
$\omega_Y$ is replaced by some other line bundle. 
Our main technical result, Theorem~\ref{dlt.chr7.gr.cor}, gives such a vanishing  for excellent dlt 3-folds whose closed points have perfect residue fields of positive characteristic $p >5$.

Then in Section~\ref{g-r.cons.sec} we use Theorem~\ref{dlt.chr7.gr.cor} to derive local rationality properties of dlt 3-fold pairs in positive and mixed characteristic. Finally we use the same techniques to obtain liftability and rationality results for Mori fiber spaces in dimension 3 over perfect fields of positive characteristic; see Section~\ref{s-app}.

\begin{defn}[G-R vanishing]\label{g-r.prop.defn}
	Let $(X, \Delta)$ be a pair, where  $X$ is a normal, excellent scheme
	with a dualizing complex and  $\Delta$ is a boundary on $X$ (that is, an
	$\mathbb{R}$-divisor whose coefficients are in $[0,1]$). We assume from
	now on  that log resolutions of singularities exist for the  pairs that we
	work with.
	
	We say that  {\it(strong)  G-R vanishing} holds {\it over}   $(X, \Delta)$
	if
	the following is  satisfied for every  log resolution $g \colon (X',
	E+g^{-1}_*\Delta)\to (X, \Delta)$, where $E$ is the exceptional divisor of
	$g$.
	\begin{enumerate}%\setcounter{enumi}{3}
		\item[]  Let  $D'$ be a Weil $\z$-divisor  and $\Delta'$ an  effective
		$\r$-divisor on $X'$. Assume that      $g_*\Delta'\leq \Delta$,
		$\rdown{\ex(\Delta')}=0$ (where  $\ex(\Delta')$ denotes the
		$g$-exceptional part of $\Delta'$), and
		\begin{enumerate}
			\item (for G-R vanishing)   $D'\sim_{g,\r} K_{X'}+\Delta'$,
			\item (for strong G-R vanishing)   $D'\sim_{g,\r}
			K_{X'}+\Delta'+(\mbox{$g$-nef $\r$-divisor})$.
		\end{enumerate}
		Then  $R^ig_*\o_{X'}(D')=0$ for $i>0$.
	\end{enumerate}
	We check in Section~\ref{bir.inv.subsect} that if  G-R vanishing holds
	for one log resolution of $X$, then it holds for every log resolution.
\end{defn}

\begin{rem}
	G-R vanishing is simpler to handle since  the condition
	$D'\sim_{g,\r} K_{X'}+\Delta'$ is preserved by contractions and flips.
	By contrast, being $g$-nef is not preserved.
	However, the two versions might be  almost equivalent, though we can not formulate a precise statement or conjecture.
\end{rem}

If $X$ is essentially of finite type over a field of characteristic 0, then
strong G-R vanishing is a special case of the general Kodaira-type vanishing theorems; see \cite[2.68]{km-book}. Recently these vanishing theorems have been extended to general excellent $\mathbb{Q}$-schemes by Murayama; see \cite{murayama2021relative}.
Strong G-R vanishing also holds over 2-dimensional, excellent schemes by
\cite{lip-rs}; see \cite[10.4]{kk-singbook}.
In particular, if $X$ is any normal excellent scheme  then
the support of $R^ig_*\o_{X'}(D')=0$ has codimension $\geq 3$ for $i>0$.

However, G-R vanishing is known to fail for 3-folds in every positive characteristic, as shown by affine cones over smooth projective surfaces violating the Kodaira vanishing theorem.
Thus we need some restrictions on the singularities of the base for G-R vanishing to hold in positive and mixed characteristic.
The following is our main result:

\begin{thm} \label{dlt.chr7.gr.cor}
	Let $(X, \Delta)$ be a  3-dimensional dlt pair,
	where $X$ is an excellent scheme with a dualizing complex whose closed points have perfect residue fields of characteristic $p>5$.
	Then G-R vanishing holds over $(X, \Delta)$. 
\end{thm}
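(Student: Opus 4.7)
My plan reduces the dlt case to the terminal one via the 3-fold MMP in characteristic $p>5$, then applies Kov\'acs' theorem \cite{kov-rtl} on the terminal base.

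\textbf{Step 1 (Reductions and dlt modification).} By the birational invariance of G-R vanishing (Section~\ref{bir.inv.subsect}) and localization, I assume $X=\spec R$ local with closed point of residue characteristic $p>5$. A small perturbation of $\Delta'$ by $g$-exceptional $\q$-divisors (preserving $g_*\Delta'\le \Delta$ and $\rdown{\ex(\Delta')}=0$) lets me assume $\Delta'$ is a $\q$-divisor with $D'\sim_{g,\q}K_{X'}+\Delta'$. Using the existence of $\q$-factorial dlt modifications for excellent 3-fold dlt pairs in characteristic $p>5$, take $h:(Y,\Delta_Y)\to(X,\Delta)$ with $K_Y+\Delta_Y=h^*(K_X+\Delta)$, and factor $g=h\circ g'$. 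Via the Leray spectral sequence, the problem splits into G-R vanishing for $g':X'\to Y$ and $R^ih_*\o_Y(g'_*D')=0$; both have the form of the original statement, the latter with $X$ replaced by the $\q$-factorial dlt base $Y$. Hence I may assume $X$ itself is $\q$-factorial dlt.

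\textbf{Step 2 (Peeling the reduced boundary).} I induct on the number of components of $\rdown{\Delta}=\sum_i S_i$. For $S:=S_1$ with strict transform $S'\subset X'$, consider
\[
0\to\o_{X'}(D'-S')\to\o_{X'}(D')\to\o_{S'}(D'|_{S'})\to 0.
\]
The left flank satisfies $D'-S'\sim_{g,\q}K_{X'}+(\Delta'-S')$ with $g_*(\Delta'-S')\le\Delta-S$, falling under the inductive hypothesis applied to $(X,\Delta-S)$. For the right flank, dlt adjunction guarantees that $S$ is normal and $(S,\diff_S(\Delta-S))$ is dlt of dimension~2; moreover $D'|_{S'}\sim_{g|_{S'},\q}K_{S'}+(\Delta'-S')|_{S'}$, reducing to 2-dimensional G-R vanishing, which holds by \cite[10.4]{kk-singbook} (ultimately via Lipman \cite{lip-rs}). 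Both flanks vanish, completing the induction down to the klt case.

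\textbf{Step 3 (Klt case via terminalization).} When $\rdown{\Delta}=0$, take a small $\q$-factorial terminalization $\pi:Z\to X$ of $(X,\Delta)$. In dimension 3 and characteristic $p>5$, the resulting terminal singularities are Cohen-Macaulay with rational singularities, so Kov\'acs' theorem \cite{kov-rtl} applies to a log resolution of $Z$ factoring through $X'$, yielding the desired vanishing over $Z$. The remaining vanishing $R^i\pi_*=0$ for the small Weil divisor $g'_*D'$ on $Z$ follows from relative Kawamata-Viehweg vanishing for klt 3-fold pairs in $p>5$, since $\pi$ is small. Splicing by Leray gives $R^ig_*\o_{X'}(D')=0$.

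\textbf{Main obstacle.} The delicate step is Step 2: one must verify that $D'|_{S'}$ is a Weil $\z$-divisor satisfying the G-R hypotheses on the $2$-dimensional dlt pair $(S,\diff_S(\Delta-S))$, and that the adjunction formula holds in the required form. Both rely on the good behavior of dlt adjunction over perfect residue fields in characteristic $p>5$, which is precisely where the restriction on $p$ enters throughout the argument.
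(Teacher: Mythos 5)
Your Step 3 is where the argument breaks down, and unfortunately it is the heart of the matter. First, Kov\'acs' theorem from \cite{kov-rtl} gives $R^ig_*\omega_{X'}=0$ only for the dualizing sheaf; it says nothing about the general divisorial sheaves $\o_{X'}(D')$ with $D'\sim_{g,\r}K_{X'}+\Delta'$ that G-R vanishing (Definition~\ref{g-r.prop.defn}) requires --- the introduction of the paper states explicitly that passing from $\omega_{X'}$ to such $D'$ is the whole difficulty. Second, the ``relative Kawamata-Viehweg vanishing for klt 3-fold pairs in $p>5$'' that you invoke is not an available result: such vanishing fails for threefolds in every positive characteristic (cones over surfaces violating Kodaira vanishing), and the only usable inputs are Tanaka's vanishing for surfaces and the \cite{ABL20} vanishing for surfaces of del Pezzo type over perfect fields --- which your argument never reaches. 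Third, a $\q$-factorial terminalization of a klt pair is in general \emph{not} small (it extracts the divisors of nonpositive discrepancy), so $R^i\pi_*$ of a divisorial sheaf on $Z$ is itself a nontrivial G-R statement and the reduction is circular. The essential case your plan never confronts is a klt point whose resolution contracts a del Pezzo-type surface $E$ to the point: there the required vanishing is genuinely a Kodaira-type statement on $E$, and this is precisely where $p>5$ and perfectness of the residue field must enter. Your Step 2 also has a coefficient problem: peeling $S$ off requires $\coeff_{S'}\Delta'=1$, whereas the definition only gives $g_*\Delta'\le\Delta$; this can be patched by inducting on the number of components of $\rdown{\Delta}$ over all dlt pairs, but the restricted divisor $(\Delta'-S')|_{S'}$ may then acquire $(g|_{S'})$-exceptional components with coefficient $1$, violating the hypotheses of the two-dimensional statement.

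For comparison, the paper argues in the opposite direction: it takes a log resolution $Y\to X$ whose exceptional divisor supports a relatively ample divisor, establishes G-R vanishing on the snc pair $(Y,\Theta)$ (Section~\ref{bir.inv.subsect}), and then runs a relative MMP from $Y$ back down to $X$, proving that G-R vanishing descends through each divisorial contraction (Proposition~\ref{g-r.desc.contr.lem}, using the filtration by $\o(D-mS)$ and K-V vanishing on the contracted surface, which is where \cite{ABL20} and $p>5$ are used) and each flip (Proposition~\ref{g-r.desc.flip.lem}). This descent-along-the-MMP mechanism is exactly what replaces the nonexistent relative Kawamata-Viehweg theorem in your Step 3.
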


In  Theorem~\ref{dlt.chr7.gr.cor} the assumption on the characteristics is optimal, as shown by the examples in \cite{CT19,Ber17,ABL20}.

\begin{sketch}
	To prove Theorem \ref{dlt.chr7.gr.cor}, we consider a log resolution $g \colon Y \to (X, \Delta)$ and we run a suitable MMP on $Y$ which will end with $(X, \Delta)$. 	
	Since in Section \ref{bir.inv.subsect} we show that G-R vanishing holds on snc threefold pairs, we are left to show that G-R vanishing is preserved under the steps of the MMP. This is done in Proposition \ref{g-r.desc.contr.lem} (resp. Proposition \ref{g-r.desc.flip.lem}) in the case of divisorial contractions (resp. flips). The case of divisorial contractions is where we need the assumptions on the residue fields as we use the following vanishing theorem for surfaces of del Pezzo type proved in \cite[Theorem 1.1]{ABL20}:
	\begin{thm}
		Let $k$ be a perfect field of characteristic $p>5$. 
		Let $X$ be a surface of del Pezzo type over $k$. 
		Let $D$ be a Weil divisor on $X$ and suppose that there exists an effective $\mathbb{Q}$-divisor $\Delta$ such that $(X, \Delta)$ is a klt pair
		and $D-(K_X+\Delta)$ is big and nef. Then $H^i(X, \mathcal{O}_X(D))=0$ for $i>0$.
	\end{thm}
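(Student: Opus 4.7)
The strategy is to reduce the statement to a Kawamata-Viehweg-type vanishing on globally F-regular varieties. Recall that a normal projective variety $X$ over a perfect field of characteristic $p>0$ is called globally F-regular if, for every effective Weil divisor $E$, the composition $\mathcal{O}_X \to F^e_*\mathcal{O}_X(E)$ splits as a morphism of $\mathcal{O}_X$-modules for some $e \geq 1$. A theorem of Schwede--Smith then provides the vanishing we want: if $X$ is globally F-regular and $N$ is a Weil divisor such that $N - (K_X + \Gamma)$ is big and nef for some effective $\mathbb{Q}$-divisor $\Gamma$ making $(X, \Gamma)$ klt, then $H^i(X, \mathcal{O}_X(N)) = 0$ for all $i > 0$. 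It therefore suffices to establish the global F-regularity of $X$ and then apply this vanishing with $N = D$ and $\Gamma = \Delta$.

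I would prove the global F-regularity of a del Pezzo type surface in two steps. First, by definition of del Pezzo type there is an effective $\mathbb{Q}$-divisor $\Delta_0$ such that $(X, \Delta_0)$ is klt and $-(K_X + \Delta_0)$ is ample; this provides the large supply of effective anticanonical sections needed later. Second, in characteristic $p > 5$, the classification of two-dimensional klt singularities (Hara, Hara--Watanabe) shows that they are all strongly F-regular. Combining these two inputs via a standard Frobenius splitting argument --- take suitable sections of $\mathcal{O}_X(-m(K_X+\Delta_0))$ for divisible $m$, and use local F-regularity to promote their images under iterates of Frobenius to splittings of $\mathcal{O}_X \to F^e_*\mathcal{O}_X(E)$ for an arbitrary effective Weil divisor $E$ --- produces the required global splittings.

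The main obstacle is exactly this passage from local F-regularity of klt surface singularities to global F-regularity of del Pezzo type surfaces, where the restriction $p>5$ becomes essential. Both ingredients can fail in smaller characteristics: klt surface singularities need not be F-regular for $p \le 5$, and even when they are, global Frobenius splittings compatible with arbitrary effective divisors are obstructed by the wild behaviour of Frobenius on the resolution. The optimality of the bound $p>5$ is witnessed by the counterexamples of Cascini--Tanaka, Bernasconi, and Arvidsson--Bernasconi--Lacini cited in the excerpt. Granting the global F-regularity, the conclusion becomes a direct application of Schwede--Smith's vanishing theorem to the big and nef divisor $D - (K_X+\Delta)$.
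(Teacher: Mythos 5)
The statement you are asked to prove is quoted in the paper from \cite[Theorem 1.1]{ABL20}; the paper itself gives no proof, so the relevant comparison is with the argument of Arvidsson--Bernasconi--Lacini. Your route is genuinely different from theirs, and it contains a real gap. The crux of your argument is the assertion that every surface of del Pezzo type over a perfect field of characteristic $p>5$ is globally $F$-regular, obtained by ``a standard Frobenius splitting argument'' from (a) the existence of a klt boundary $\Delta_0$ with $-(K_X+\Delta_0)$ ample and (b) Hara's theorem that two-dimensional klt singularities are strongly $F$-regular for $p>5$. No such standard argument exists: local strong $F$-regularity of the singularities together with the log Fano condition does not formally imply global $F$-regularity, and the implication genuinely fails in small characteristic --- the klt del Pezzo surfaces of \cite{CT19} (characteristic $2$), \cite{Ber17} (characteristic $3$) and \cite{ABL20} (characteristic $5$) violate Kawamata--Viehweg vanishing and hence cannot be globally $F$-regular, even though for $p=5$ their singularities are covered by Hara's local theorem. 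The only general positive result in this direction, due to Cascini--Tanaka--Witaszek, gives global $F$-regularity of log del Pezzo surfaces only in characteristic $p>p_0$ for an unspecified, ineffective constant $p_0$; it is not known that one may take $p_0=5$, and one of the explicit motivations of \cite{ABL20} is precisely that the sharp bound $p>5$ cannot currently be reached through global $F$-regularity.

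What Arvidsson--Bernasconi--Lacini actually do is quite different: after reducing to the Picard rank one case by an MMP and careful bookkeeping of Weil versus Cartier divisors, they invoke Lacini's classification of rank one log del Pezzo surfaces in characteristic $p>5$ to show that such surfaces are log liftable to characteristic zero (the minimal resolution together with its exceptional locus lifts over the Witt vectors), and then deduce the vanishing from the lifting via logarithmic differential forms and Hara-type arguments. If you wish to salvage your approach you must either prove global $F$-regularity for all $p>5$ directly --- an open problem at least as hard as the theorem itself --- or replace that input by the liftability statement. Your final step, applying Smith/Schwede--Smith vanishing to a Weil divisor whose big and nef $\mathbb{Q}$-Cartier twist is as in the hypothesis, is reasonable modulo the usual care with non-Cartier $D$, but it rests entirely on the unproved global $F$-regularity.
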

\end{sketch}

\begin{ack} We thank Christopher~D.~Hacon, Liam~Stigant, Jakub~Witaszek, Chenyang~Xu,  Takehiko~Yasuda and the anonymous referees for helpful comments and corrections.
	
	FB was supported by the NSF under grant number DMS-1801851 and by a grant from the Simons Foundation;
	Award Number: 256202. JK was supported by the NSF under grant number
	DMS-1901855.
\end{ack}

\section{Preliminaries}

In this section we gather various results that we use. Most of these have appeared in the literature, but not stated in the form or generality that we need. 

We refer to \cite{kk-singbook} and \cite[Section 2.5]{mixed-char-3-mmp} for the basic definitions in birational geometry and of the singularities appearing in the Minimal Model Program for excellent schemes.

\subsection{A restriction short exact sequence}

%For the proof of Proposition~\ref{g-r.desc.contr.lem} we will need 
The following two results  are proved, though  not explicitly stated, in \cite[Section 3]{HW19}.

\begin{lem} \label{flex.KV.ind.lem.1}  
	Let $(X, S+\Delta)$ be an excellent dlt pair where $S$ is a prime divisor with normalization $\nu \colon \bar S \to S$.   Then, for every Weil $\z$-divisor class $D_X$ on $X$, there are
	\begin{enumerate}
		\item  a Weil  $\z$-divisor class  $D_{\bar S}$ on $\bar S$,
		\item  a $\q$-divisor $0\leq \Theta_D\leq \diff_{\bar S}(\Delta)$, and
		\item a natural left exact sequence 
		$$
		0\to \omega_X(D_X) \to  \omega_X(S+D_X)\stackrel{r_S}{\longrightarrow} \nu_*\omega_{\bar S}(D_{\bar S})\to 0,
		\eqno{(\ref{flex.KV.ind.lem.1}.3.1)}
		$$
	\end{enumerate}
	such that
	\begin{enumerate}\setcounter{enumi}{3}
		\item $D_{\bar S}\sim_{\mathbb{Q}} D|_{\bar S}+\Theta_D$,
		\item  the sequence (\ref{flex.KV.ind.lem.1}.3.1) is exact along the regular locus of $S$, and
		\item if $\omega_X(D_X)$ is $S_3$ along $S$, then (\ref{flex.KV.ind.lem.1}.3.1) is exact. 
	\end{enumerate}
\end{lem}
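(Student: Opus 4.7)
The plan is to construct the restriction sequence from the Poincar\'e residue on the snc locus and then extend it by $S_2$-reflexivity arguments, following closely the approach outlined in Section~3 of \cite{HW19}.

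First, let $U \subset X$ be the snc locus of $(X, S+\Delta)$; by the dlt hypothesis its complement $Z := X \setminus U$ has $\codim_X Z \geq 2$. Over $U$ the Poincar\'e residue short exact sequence
\[
0 \to \omega_U(D_X|_U) \to \omega_U(S + D_X)|_U \to \omega_{S \cap U}(D_X|_{S \cap U}) \to 0
\]
is exact. I would next identify the divisor class $D_{\bar S}$ as the unique reflexive extension to $\bar S$ of the restriction $D_X|_{S \cap U}$, corrected at each codimension-$1$ prime $P \subset \bar S$ lying over the non-snc locus of $(X, S+\Delta)$. By the dlt adjunction formula $K_{\bar S} + \diff_{\bar S}(\Delta) \sim_{\mathbb{Q}} \nu^*(K_X + S + \Delta)$, the correction at each such $P$ is a rational number $\theta_P \in [0,\ \coeff_P \diff_{\bar S}(\Delta)]$; setting $\Theta_D := \sum_P \theta_P \cdot P$ then yields items~(1), (2), and~(4).

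Next, I would establish the left-exact sequence (\ref{flex.KV.ind.lem.1}.3.1) by extending $r_S$ uniquely from $U$ to $X$, using that $\nu_*\omega_{\bar S}(D_{\bar S})$ is $S_2$ (as the finite pushforward of a reflexive sheaf on the normal scheme $\bar S$) and that $\codim_X Z \geq 2$. Injectivity of $\omega_X(D_X) \hookrightarrow \omega_X(S + D_X)$ is automatic since both sheaves are torsion-free reflexive and the map is injective on $U$, while the composite with $r_S$ vanishes on $U$ and hence globally. This proves~(3); item~(5) is immediate from the snc residue sequence above.

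The main obstacle is~(6): surjectivity of $r_S$ under the $S_3$ hypothesis. I would let $\mathcal{T} := \omega_X(S + D_X)/\omega_X(D_X)$ and factor $r_S$ through a natural map $\varphi \colon \mathcal{T} \to \nu_*\omega_{\bar S}(D_{\bar S})$, which by the construction of $D_{\bar S}$ is an isomorphism at every codimension-$\leq 1$ point of $\bar S$. A standard depth chase on the short exact sequence
\[
0 \to \omega_X(D_X) \to \omega_X(S + D_X) \to \mathcal{T} \to 0
\]
gives, at every codimension-$3$ point $x \in X$ (equivalently a codimension-$2$ point of $\bar S$),
\[
\depth_x \mathcal{T} \geq \min\bigl(\depth_x \omega_X(S + D_X),\ \depth_x \omega_X(D_X) - 1\bigr) \geq \min(2,\ 3 - 1) = 2,
\]
where the first bound uses reflexivity of $\omega_X(S + D_X)$ and the second uses the $S_3$ assumption on $\omega_X(D_X)$. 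Hence $\mathcal{T}$ is $S_2$ along $\bar S$, so $\varphi$ is a morphism between $S_2$ sheaves on $\bar S$ that is an isomorphism in codimension $\leq 1$, and must therefore be an isomorphism, giving~(6).
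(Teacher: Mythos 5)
Your overall architecture---the residue sequence on the snc locus, extension of the quotient map by $S_2$-reflexivity, and a depth chase for exactness under the $S_3$ hypothesis---matches the paper's proof, and your treatment of items (3), (5) and (6) is essentially sound; in particular the inequality $\depth_x\mathcal{T}\ge\min\bigl(\depth_x\omega_X(S+D_X),\ \depth_x\omega_X(D_X)-1\bigr)\ge 2$ is a correct, self-contained replacement for the paper's appeal to \cite[2.60]{kk-singbook}. The genuine gap is in items (2) and (4), which carry the real content of the lemma. You define $D_{\bar S}$ as the reflexive extension of $D_X|_{S\cap U}$ ``corrected'' at each codimension-one prime $P\subset\bar S$ over the non-snc locus, and then assert that ``by the dlt adjunction formula'' the correction satisfies $\theta_P\in[0,\coeff_P\diff_{\bar S}(\Delta)]$. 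This does not follow from $K_{\bar S}+\diff_{\bar S}(\Delta)\sim_{\q}\nu^*(K_X+S+\Delta)$: that formula governs only the canonical classes (essentially the case $D_X=0$), whereas here one must compute the quotient $\mathcal{T}=\omega_X(S+D_X)/\omega_X(D_X)$, and its $S_2$-hull, at an arbitrary codimension-two dlt point of $X$ for an arbitrary Weil divisor $D_X$. That computation is where all the work is: the paper localizes at the generic points of the support of the hull to reduce to a two-dimensional dlt germ, takes a thrifty log resolution, and runs the explicit calculation of \cite[Proposition 3.1]{HW19}; this is what produces both the identification $D_{\bar S}=\lfloor D_X|_{\bar S}\rfloor$ and the two-sided bound on $\Theta_D$. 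As written, your $\theta_P$ is either undefined or, if defined tautologically as whatever exponent makes the hull equal to $\omega_{\bar S}(D_{\bar S})$, then the bound $0\le\theta_P\le\coeff_P\diff_{\bar S}(\Delta)$ is exactly the unproved claim.

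Two smaller points. First, you should engage with the fact that $S$ need not be normal in positive characteristic (it is only regular in codimension one, by \cite[2.31]{kk-singbook}); this is why the paper passes from the quotient sheaf on $S$ to a reflexive divisorial sheaf on $\bar S$ via restriction to $S_{\rm reg}$, and identifies its pushforward with the $S_2$-hull $\mathcal{T}^{(**)}$. Your ``natural map'' $\varphi\colon\mathcal{T}\to\nu_*\omega_{\bar S}(D_{\bar S})$ is this hull map, and its existence deserves a sentence. Second, for the final cokernel argument you also need $\mathcal{T}$ to be torsion-free of rank one on $S$ (so that $\ker\varphi=0$); this is again \cite[2.60]{kk-singbook} and should be stated before you invoke the ``isomorphism in codimension one between $S_2$ sheaves'' principle.
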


\begin{proof}
	Note that $S$ is regular in codimension one by \cite[2.31]{kk-singbook}, but it need not be normal in positive characteristic (see \cite{CT19, Ber19}).
	
	We start by  replacing $D_X$ with a linearly equivalent divisor not containing $S$. 
	In this way $D_X|_{\bar S}$ is a well-defined $\q$-divisor on ${\bar S}$.
	We now have the exact sequence
	$$ 0\to \omega_X(D_X) \to  \omega_X(S+D_X)\stackrel{r_S}{\longrightarrow} \mathcal{Q}\to 0,
	$$
	where $\mathcal{Q}$ is a sheaf supported on $S$.
	Note that $\mathcal{Q}$ is a torsion free sheaf of rank one on $S$ by \cite[2.60]{kk-singbook} and  there is a natural injection into its  $S_2$-hull $\mathcal{Q} \hookrightarrow \mathcal{Q}^{(**)}$ (if $S$ is normal, the $S_2$-hull equals the reflexive hull; see \cite{k-coherent} for the general case.)
	
	Let $\mathcal{Q}|_{S_{\text{reg}}}$ be the restriction of $\mathcal{Q}$ to the regular locus of $S$. Since $S$ is regular in codimension one, $\mathcal{Q}|_{S_{\text{reg}}}$ extends naturally to a reflexive divisorial sheaf $\overline{\mathcal{Q}}$ on $\bar S$. Since $\nu_*\overline{\mathcal{Q}}$ is $S_2$ and coincide with $\mathcal{Q}^{(**)}$ on a big open set, we conclude they are isomorphic.
	
	We are left to show that $\overline{\mathcal{Q}}$ is isomorphic to $\o_{\bar S}(K_{\bar S}+D_{\bar S}),$ where $D_{\bar S}:= \lfloor D_X|_{\bar S} \rfloor$.
	It is enough to prove this after localization at  generic points of $\supp \overline{\mathcal{Q}}$. We may thus assume that $\dim X=2$, and then
	$\pi \colon Y \to (X, S+ \Delta)$ has a thrifty log resolution \cite[2.79]{kk-singbook}. The proof of \cite[Proposition 3.1]{HW19} now carries over. 
	
	If $\omega_X(D_X)$ is $S_3$, we conclude that the sequence (\ref{flex.KV.ind.lem.1}.3.1) is exact because $\mathcal{Q}$ is $S_2$ by \cite[2.60]{kk-singbook}.	
\end{proof}

% {\it Comments on the proof.} If $S\not\subset \supp D_X$, then $D_X|_S$ is a well defined $\q$-divisor, 
% $$
% D_S=\rdown{D_X|_S} \qtq{and}  \Theta_D=\{D_X|_S\}.
% $$
% In particular,  if $D$ is Cartier then $\Theta_D= 0$.
% One needs to check that this definition preserves linear equivalences.
% Note also that $\sing S$ has codimension $\geq 2$ by  \cite[2.31]{kk-singbook}.

We will use  sequences as in  (\ref{flex.KV.ind.lem.1}.3.1) to lift
vanishing statements from $S$ to $X$, but we need a stronger form of vanishing of Kawamata-Viehweg type on $S$. 

\begin{defn}\label{flex.KV.defn}  Let $(X, S+\Delta)$ be an excellent dlt pair and $g\colon X\to Z$ a proper  morphism.  By {\it K-V vanishing} 
	for $g \colon (X, S+\Delta)\to Z$ we mean the following property.
	\begin{enumerate}
		\item $R^ig_*\o_X(D)=0$ for $i>0$, for every 
		Weil divisor $D$  such that  
		$D\sim_{g, \r} K_X+ S+\Delta'+L$ for some
		$0\leq\Delta'\leq\Delta$, where   $L$ is $g$-ample.
		% or   $L$ is $g$-nef and $g$ is birational.
	\end{enumerate}
	
	K-V vanishing is known to hold if $Z$ is essentially of finite type over a field of characteristic 0, if $\dim X=2$ and $g$ has relative dimension $\leq 1$, or if
	$(X, S+\Delta)$ is a log del~Pezzo surface pair over a perfect field and the characteristic is $\geq 7$ by
	\cite{ABL20}. 
\end{defn}

\begin{prop} \label{flex.KV.ind.lem}  
	Let $(X, S+\Delta)$ be an excellent dlt pair, $g \colon X\to Z$ a proper  morphism and
	$D$ a  Weil $\z$-divisor  on $X$.
	Assume that
	\begin{enumerate}
		\item $D\sim_{g,\r} K_X+ S+\Delta'+L$ for some
		$0\leq\Delta'\leq\Delta$, 
		\item  $\o_X(D-mS)$ is $S_3$ along $S$ for every $m\geq 1$, and 
		\item   $-S$ is $g$-nef.
	\end{enumerate}
	Assume further that one of the following is satisfied.
	\begin{enumerate}\setcounter{enumi}{3}
		\item  $L$ is an ample $\q$-Cartier divisor and K-V vanishing holds for 
		$g|_{\bar S}$, or
		\item $L$ is a nef $\q$-Cartier divisor, $g|_{\bar S}$ is birational and strong G-R vanishing holds for $g|_{\bar S}$.
	\end{enumerate}
	Then $R^ig_*\o_X(D)=0$ for $i>0$ near $g(S)$.
\end{prop}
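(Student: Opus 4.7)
The plan is to set up a tower of restriction short exact sequences via Lemma~\ref{flex.KV.ind.lem.1}, use the assumed vanishing on $\bar S$ to kill the higher pushforwards of each graded piece, and finally apply Grothendieck's theorem on formal functions to propagate the vanishing from the infinitesimal neighborhoods of $S$ to the sheaf $\o_X(D)$ itself near $g(S)$.

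First, for every $m\geq 0$ I would apply Lemma~\ref{flex.KV.ind.lem.1} to the Weil $\z$-divisor class $D_X^{(m)}:=D-K_X-(m+1)S$, so that $K_X+S+D_X^{(m)}$ represents the class $D-mS$. Hypothesis~(2) applied with index $m+1$ yields the $S_3$-ness of $\o_X(D-(m+1)S)$ along $S$, so combining (\ref{flex.KV.ind.lem.1}.3.1) with (\ref{flex.KV.ind.lem.1}.6) one obtains genuinely short exact sequences
\begin{equation*}
0\to \o_X\bigl(D-(m+1)S\bigr)\to \o_X(D-mS)\to \nu_*\o_{\bar S}\bigl(K_{\bar S}+D_{\bar S}^{(m)}\bigr)\to 0.
\end{equation*}

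Second, using hypothesis~(1) one has $D_X^{(m)}\sim_{g,\r}\Delta'+L-mS$. Combined with Lemma~\ref{flex.KV.ind.lem.1}(4) this gives
\begin{equation*}
K_{\bar S}+D_{\bar S}^{(m)}\sim_{g|_{\bar S},\r}K_{\bar S}+\Theta_m+\bigl(L|_{\bar S}-mS|_{\bar S}\bigr),
\end{equation*}
where $\Theta_m$ is an effective $\q$-divisor bounded above by $\diff_{\bar S}(\Delta)$, assembled from $\Theta_{D_X^{(m)}}$ and $\Delta'|_{\bar S}$ after a suitable choice of representative. Hypothesis~(3) is crucial here: since $-S$ is $g$-nef, $L|_{\bar S}-mS|_{\bar S}$ is $g|_{\bar S}$-ample in case~(4) (ample plus nef is ample) and $g|_{\bar S}$-nef in case~(5). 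Applying K-V vanishing (case~4) or strong G-R vanishing (case~5) to the dlt pair $\bigl(\bar S,\diff_{\bar S}(\Delta)\bigr)$, together with the fact that $R^j\nu_*=0$ for $j>0$ since $\nu$ is finite, then gives
\begin{equation*}
R^ig_*\bigl(\nu_*\o_{\bar S}(K_{\bar S}+D_{\bar S}^{(m)})\bigr)=0\quad\text{for all }i>0,\ m\geq 0.
\end{equation*}

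Third, iterating the long exact sequences attached to the short exact sequences above and inducting on $m$ shows that $R^ig_*\bigl(\o_X(D)/\o_X(D-mS)\bigr)=0$ for every $i>0$ and every $m\geq 1$. Using Grothendieck's formal functions theorem together with Artin--Rees to compare the filtration $\{\o_X(D-mS)\}_{m}$ with the $\mathcal{I}_{g(S)}$-adic filtration on $\o_X(D)$ along $g^{-1}\bigl(g(S)\bigr)$, one concludes that the completed stalks of $R^ig_*\o_X(D)$ at every point of $g(S)$ vanish. Faithful flatness of completion, together with coherence of $R^ig_*\o_X(D)$, then yields the vanishing on an honest Zariski neighborhood of $g(S)$.

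The main obstacle is Step~2: the delicate bookkeeping required to package $\Theta_{D_X^{(m)}}$ together with $\Delta'|_{\bar S}$ into a single effective $\q$-divisor $\Theta_m\leq\diff_{\bar S}(\Delta)$ on $\bar S$, so that K-V or strong G-R vanishing applies with the correct hypotheses on the pair $\bigl(\bar S,\diff_{\bar S}(\Delta)\bigr)$. This hinges on choosing a representative of $D_X^{(m)}$ within its $\r$-linear class whose restriction to $\bar S$ has fractional part fitting inside $\diff_{\bar S}(\Delta)-\Delta'|_{\bar S}$, an effective $\q$-divisor thanks to $\Delta'\leq\Delta$ and the monotonicity of the different in the boundary for dlt pairs.
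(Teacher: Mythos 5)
Your proposal is correct and takes essentially the same route as the paper's own proof: the same application of Lemma~\ref{flex.KV.ind.lem.1} to the classes $D-K_X-(m+1)S$, the same induction on the quotients $\o_X(D)/\o_X(D-mS)$ using K-V (resp.\ strong G-R) vanishing on $\bar S$ for the graded pieces $K_{\bar S}+\Theta_m+L|_{\bar S}-mS|_{\bar S}$, and the same conclusion via the theorem on formal functions. The only cosmetic differences are that the paper phrases the filtration via the images $J_m=\im[\o_X(D-mS)\to\o_X(D)]$ and a snake-lemma step, and leaves implicit the Artin--Rees comparison of filtrations that you spell out.
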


\begin{proof}  Set  $J_m:=\im[\o_X(D-mS)\to \o_X(D)]$.
	Applying Lemma \ref{flex.KV.ind.lem.1} to $D_m:=D-K_X-(m+1)S$ we get a short exact sequence:
	\[0 \to \o_X(D-(m+1)S) \to \o_X(D-mS) \to \nu_*\o_{\bar S}(G_m) \to 0, \]
	where $G_m \sim_{\r} -mS|_{\bar{S}}+D|_S-\Delta_m$ for some  $0 \leq \Delta_m \leq \diff_{\bar{S}}(\Delta')$ and $\nu \colon \bar{S} \to S$ is the normalisation morphism. 
	Thus by the snake lemma we get the following short exact sequence:
	\[ 0 \to \nu_*\o_{\bar S}(G_m) \to \mathcal{O}_{X}(D)/J_{m+1} \to \mathcal{O}_{X}(D)/J_m \to 0.  \]
	
	Consider the induced exact sequence for $i>0$:
	\[R^ig_* \nu_*\o_{\bar S}(G_m) \to R^ig_*(\o_X(D)/J_{m+1}) \to R^ig_*(\o_X(D)/J_{m}).\]
	The first term vanishes by assumptions (4--5) because $G_m \sim_{g|_{\bar{S}}, \r} K_{\bar{S}}+\diff_{\bar{S}}(\Delta') -\Delta_m +L|_{\bar{S}}-mS|_{\bar{S}}$, so we conclude by induction that $R^ig_*(\o_X(D)/J_{m})=0$ for all $m \geq 0$.
	Thus  $R^ig_*\o_X(D)=0$ for $i>0$ near $g(S)$
	by the formal function theorem.
\end{proof}

\subsection{Fujita transform and CM criteria} \label{s-fuj-transf}

One can not reasonably define the pull-back of an arbitrary  Weil $\z$-divisor  by a birational morphism. Nonetheless, there are two  replacements---considered in \cite{fujita85}---that are very useful for vanishing theorems. We will use these  in Section~\ref{g-r.cons.sec}.

\begin{defn}[Fujita transforms]\label{fuj.tr.say}
	Let $\pi\colon X\to Y$ be a proper, birational morphism of normal schemes.
	Let  $D$  be a Weil $\z$-divisor on $Y$ and $\Delta_D$ an   $\r$-divisor.
	(Eventually it will be a  boundary, but for now this is not needed). Assume that $D+\Delta_D$ is $\r$-Cartier. 
	
	The {\it Fujita transform} of $(D,\Delta_D)$ is the unique pair
	$\bigl(\pi^F(D), \Delta_{X,D}\bigr)$ such that
	\begin{enumerate}
		\item $\pi^{\rm F}(D)$ is a Weil $\z$-divisor on $X$ such that $\pi_*\bigl(\pi^{\rm F}(D)\bigr)=D$,
		\item $\Delta_{X,D}$ is  an   $\r$-divisor on $X$ such that $\pi_*(\Delta_{X,D})=\Delta_D$,
		\item $\rdown{\ex(\Delta_{X,D})}=0$, and 
		\item $\pi^{\rm F}(D)+ \Delta_{X,D}\sim_{\pi, \r} 0$.
	\end{enumerate}
	{\it Warning.} The notation is slightly misleading since  $\pi^{\rm F}(D)$ also depends on $\Delta_D$. In our applications, $\Delta_D$ will be fixed.
	\medskip
	
	Assume next that $K_Y+\Delta$  is $\r$-Cartier  and $0\leq \Delta_D\leq \Delta$.
	Set $\Delta^c_D:=\Delta - \Delta_D$.
	The {\it K-twisted Fujita transform} of $(D,\Delta_D)$ is the unique pair
	$\bigl(\pi^{\rm KF}(D), \Delta^c_{X,D}\bigr)$ such that
	\begin{enumerate}
		\item[(1')] $\pi^{\rm KF}(D)$ is a Weil $\z$-divisor on $X$ such that $\pi_*\bigl(\pi^{\rm KF}(D)\bigr)=D$,
		\item[(2')] $\Delta^c_{X,D}$ is  an   $\r$-divisor on $X$ such that $\pi_*(\Delta^c_{X,D})=\Delta^c_D$,
		\item[(3')] $\rdown{\ex(\Delta^c_{X,D})}=0$, and 
		\item[(4')] $\pi^{\rm KF}(D)\sim_{\pi,\r} K_X+ \Delta^c_{X,D}$.
	\end{enumerate}
\end{defn}	
Note that   the Fujita transform  and the K-twisted Fujita transform are both functors.

The  key observations of \cite{fujita85} 
are the following three claims. 

\begin{claim} \label{c.pushforward.fujita}
	$\pi_*\o_X\bigl(\pi^{\rm F}(D)\bigr)=\o_Y(D)$. 
\end{claim}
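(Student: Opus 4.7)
The plan is to identify both sheaves with subsheaves of the constant sheaf $K(Y)$ of rational functions on $Y$ (using $K(X)=K(Y)$ since $\pi$ is birational) and to verify the two inclusions separately.

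For $\pi_*\o_X(\pi^{\rm F}(D)) \subseteq \o_Y(D)$: a section over an open $V \subseteq Y$ is a rational function $f$ satisfying $\mathrm{div}_X(f) + \pi^{\rm F}(D) \geq 0$ on $\pi^{-1}(V)$. Applying $\pi_*$ and invoking condition (1) of Definition \ref{fuj.tr.say}, we get $\mathrm{div}_Y(f) + D \geq 0$ on $V$, so $f \in H^0(V, \o_Y(D))$.

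For the reverse inclusion, given $f$ with $\mathrm{div}_Y(f) + D \geq 0$ on $V$, I would check $\mathrm{div}_X(f) + \pi^{\rm F}(D) \geq 0$ prime-by-prime on $\pi^{-1}(V)$. For a non-exceptional prime the inequality is immediate from $\pi_*(\pi^{\rm F}(D)) = D$. For an exceptional prime $F$, use conditions (3) and (4): condition (4) says $\pi^{\rm F}(D) + \Delta_{X,D} \sim_{\pi, \r} 0$, and locally on $Y$ this can be promoted (after absorbing an $\r$-linear combination of principal divisors into $f$) to an equation of $\r$-Cartier divisors $\pi^{\rm F}(D) + \Delta_{X,D} = \pi^*(D + \Delta_D)$. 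Together with condition (3), $\lfloor \ex(\Delta_{X,D}) \rfloor = 0$, this forces $\mathrm{ord}_F(\pi^{\rm F}(D)) = \lfloor \mathrm{ord}_F(\pi^*(D + \Delta_D)) \rfloor$, and hence
\[
\mathrm{ord}_F(f) + \mathrm{ord}_F(\pi^{\rm F}(D)) = \bigl\lfloor \mathrm{ord}_F\bigl(\pi^*(\mathrm{div}_Y(f) + D + \Delta_D)\bigr) \bigr\rfloor.
\]
Since $\mathrm{div}_Y(f) + D \geq 0$ and, in the intended applications, $\Delta_D \geq 0$, the divisor inside is effective and $\r$-Cartier, whence its pullback is effective at $F$ and the floor is non-negative.

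The main obstacle I foresee is the bookkeeping in promoting $\sim_{\pi, \r}$ to an equation of divisors in the previous step: the principal-divisor discrepancy must be absorbed into $f$ locally without violating $\mathrm{div}_Y(f) + D \geq 0$. A secondary point is that the definition permits arbitrary $\r$-divisors $\Delta_D$, but the rounding-to-floor step is only well-behaved when $\Delta_D$ is effective near the center of $F$; in the applications where $\Delta_D$ is a boundary this is automatic.
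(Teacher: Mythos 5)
Your argument is correct and is in substance identical to the paper's: the paper delegates precisely this order-by-order computation to \cite[7.30]{kk-singbook}, and you have written out that computation. The one step you leave open --- promoting $\pi^{\rm F}(D)+\Delta_{X,D}\sim_{\pi,\r}0$ to an equality of divisors --- closes with no bookkeeping and no modification of $f$: write $\pi^{\rm F}(D)+\Delta_{X,D}=\sum_i a_i\operatorname{div}_X(f_i)+\pi^*M$ with $M$ an $\r$-Cartier divisor on $Y$; pushing forward determines $M=D+\Delta_D-\sum_i a_i\operatorname{div}_Y(f_i)$, and pulling back again (using $\pi^*\operatorname{div}_Y(f_i)=\operatorname{div}_X(f_i)$ for a birational morphism of normal schemes) the principal terms cancel, giving the exact equality $\pi^{\rm F}(D)+\Delta_{X,D}=\pi^*(D+\Delta_D)$. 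Equivalently: a $\pi$-exceptional $\r$-divisor that is $\sim_{\pi,\r}0$ is zero, so there is nothing to absorb into $f$. Your ``secondary point'' is a genuine hypothesis rather than a convenience: for non-effective $\Delta_D$ the claim fails, e.g.\ for $Y=\a^2$, $\pi$ the blow-up of the origin with exceptional curve $E$, $D=0$ and $\Delta_D=-L$ with $L$ a line through the origin, one computes $\pi^{\rm F}(0)=-E$, and $\pi_*\o_X(-E)$ is the ideal sheaf of the origin rather than $\o_Y$. The paper uses effectivity of $\Delta_{X,D}$ implicitly (``$\Delta^c_{X,D}$ and $\Delta_{X,D}$ are effective by definition'' in the discussion preceding Claim~\ref{c.klt.Kfujtransf}), and it holds in all the applications since $\Delta_D$ is there a boundary.
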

\begin{proof}  
	As $-\pi^{F}(D) \sim_{g, \r} \Delta_{X,D}$, by Property 3 it is elementary to deduce that $\o_{Y}(\pi_* \pi^F(D)) = \pi_* \o_{X}(\pi^{F}(D))$ (we refer to \cite[7.30]{kk-singbook} for details). We thus conclude by Property 1.
\end{proof}

To get the second main property, 
write  $\pi^*(K_Y+\Delta)=K_X+\Delta_X$ where  $\pi_*(\Delta_X)=\Delta$.
Then 
$$
\pi^{\rm KF}(D)-\pi^{\rm F}(D)\sim_{\pi,\r} -\Delta_X+\Delta_{X,D}^{c}+\Delta_{X,D}.
$$
Here $\Delta_{X,D}^{c}$ and $\Delta_{X,D}$ are effective by definition. If all divisors appear in
$\Delta_X $ with coefficients $<1$, then all divisors appear in
$\pi^{\rm KF}(D)-\pi^{\rm F}(D)$ with coefficients $>-1$. 
Since $\pi^{\rm KF}(D)-\pi^{\rm F}(D)$ is a  $\z$-divisor, 
it is then effective.
Thus  we have proved the following.

\begin{claim} \label{c.klt.Kfujtransf}
	If $(Y, \Delta)$ is klt  or if $(Y, \Delta)$ is dlt and $\pi$ is thrifty, then
	$\pi^{\rm KF}(D)-\pi^{\rm F}(D)$ is an effective $\pi$-exceptional divisor. 
	Thus $\pi_*\o_X\bigl(\pi^{\rm KF}(D)\bigr)=\o_Y(D)$.
\end{claim}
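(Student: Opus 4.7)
The plan is to derive the formula $\pi^{\rm KF}(D)-\pi^{\rm F}(D)\sim_{\pi,\r}-\Delta_X+\Delta_{X,D}^c+\Delta_{X,D}$, read off its coefficients component by component, and then obtain the pushforward identity from Claim~\ref{c.pushforward.fujita}.

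The first step is to assemble the $\r$-linear equivalences provided by Definition~\ref{fuj.tr.say}. Condition~(4) gives $-\pi^{\rm F}(D)\sim_{\pi,\r}\Delta_{X,D}$; condition~(4') gives $\pi^{\rm KF}(D)\sim_{\pi,\r}K_X+\Delta_{X,D}^c$; and writing $\pi^*(K_Y+\Delta)=K_X+\Delta_X$ yields $K_X\sim_{\pi,\r}-\Delta_X$. Summing these produces the displayed relation. Moreover the $\z$-divisor $\pi^{\rm KF}(D)-\pi^{\rm F}(D)$ pushes forward to $D-D=0$ on $Y$ by properties~(1) and~(1'), so it is $\pi$-exceptional.

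The second step is a coefficient check. On a non-exceptional prime the right-hand side has coefficient $-\coeff(\Delta)+\coeff(\Delta_D^c)+\coeff(\Delta_D)=0$, since $\Delta_D+\Delta_D^c=\Delta$. Along an exceptional prime $E\subset X$, conditions~(3) and~(3') force $\coeff_E(\Delta_{X,D})$ and $\coeff_E(\Delta_{X,D}^c)$ to lie in $[0,1)$, while the klt hypothesis (respectively, the dlt hypothesis together with thriftiness of $\pi$) guarantees $\coeff_E(\Delta_X)<1$. Therefore the coefficient of $E$ in $\pi^{\rm KF}(D)-\pi^{\rm F}(D)$ is strictly greater than $-1$, and being an integer it must be $\geq 0$. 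Hence $\pi^{\rm KF}(D)-\pi^{\rm F}(D)$ is effective and $\pi$-exceptional.

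The pushforward identity then follows quickly. Effectivity gives an inclusion $\o_X(\pi^{\rm F}(D))\hookrightarrow\o_X(\pi^{\rm KF}(D))$; applying $\pi_*$ and combining Claim~\ref{c.pushforward.fujita} with $\pi_*\pi^{\rm KF}(D)=D$ yields
$$\o_Y(D)=\pi_*\o_X(\pi^{\rm F}(D))\hookrightarrow\pi_*\o_X(\pi^{\rm KF}(D))\subseteq\o_Y(D),$$
forcing equality throughout. The only genuine obstacle is locating where the singularity hypothesis enters: it is used solely in the bound $\coeff_E(\Delta_X)<1$ on exceptional components. For klt pairs this is automatic, whereas for dlt pairs one must invoke the thrifty assumption to rule out exceptional divisors of log discrepancy zero over $(Y,\Delta)$.
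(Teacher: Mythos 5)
Your proof is correct and follows the paper's argument essentially verbatim: you use the same relation $\pi^{\rm KF}(D)-\pi^{\rm F}(D)\sim_{\pi,\r}-\Delta_X+\Delta^c_{X,D}+\Delta_{X,D}$, the same bound $\coeff_E(\Delta_X)<1$ coming from the klt (or thrifty dlt) hypothesis, and the same integrality step to get effectivity. The only addition is that you spell out the sandwiching argument $\o_Y(D)=\pi_*\o_X\bigl(\pi^{\rm F}(D)\bigr)\subseteq\pi_*\o_X\bigl(\pi^{\rm KF}(D)\bigr)\subseteq\o_Y(D)$, which the paper leaves implicit.
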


We get the third property by comparing the definitions. 

\begin{claim} \label{c.klt.Kfujtransf.3}  $\big(K_X-\pi^{\rm F}(D)\big)$ and $\pi^{\rm KF}(D)$ satisfy the assumptions of G-R vanishing  
	(\ref{g-r.prop.defn}.1.a). Thus if G-R vanishing holds over $(Y, \Delta)$ 
	for some $\Delta\geq\Delta_D$, 
	then $R^i\pi_*\o_X\bigl(K_X-\pi^{\rm F}(D)\bigr)=0$ and
	$R^i\pi_*\o_X\bigl(\pi^{\rm KF}(D)\bigr)=0$ for $i>0$. \qed
\end{claim}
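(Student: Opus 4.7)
The plan is to unpack the defining conditions (1)--(4) and (1')--(4') of Definition \ref{fuj.tr.say} and read off directly that the two divisors in question fit the template $D' \sim_{g,\r} K_{X'}+\Delta'$ with $g_*\Delta' \leq \Delta$ and $\rdown{\ex(\Delta')}=0$ that appears in Definition \ref{g-r.prop.defn}(1.a). Once the hypotheses are verified, the stated vanishing is just the conclusion of G-R vanishing over $(Y,\Delta)$, applied (if needed) after factoring $\pi$ through a log resolution and invoking the birational invariance of Section \ref{bir.inv.subsect}.

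For the K-twisted transform the verification is immediate: condition (4') gives $\pi^{\mathrm{KF}}(D) \sim_{\pi,\r} K_X+\Delta^c_{X,D}$, condition (2') gives $\pi_*\Delta^c_{X,D}=\Delta-\Delta_D$, and since $\Delta_D\geq 0$ this is $\leq \Delta$; finally (3') is exactly $\rdown{\ex(\Delta^c_{X,D})}=0$. So the template holds with $\Delta':=\Delta^c_{X,D}$. For $K_X-\pi^{\mathrm{F}}(D)$, condition (4) yields $\pi^{\mathrm{F}}(D)\sim_{\pi,\r}-\Delta_{X,D}$, hence
\[
K_X-\pi^{\mathrm{F}}(D)\sim_{\pi,\r} K_X+\Delta_{X,D},
\]
while conditions (2) and (3) give $\pi_*\Delta_{X,D}=\Delta_D\leq\Delta$ and $\rdown{\ex(\Delta_{X,D})}=0$. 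Again the template is satisfied, this time with $\Delta':=\Delta_{X,D}$.

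The only non-cosmetic point is the bridge between $\pi\colon X\to Y$ (a proper birational map of normal schemes) and the log-resolution setup in which G-R vanishing is phrased. I would handle this by choosing a log resolution $h\colon X'\to X$ of $(X,\Delta_{X,D}+\Delta^c_{X,D})$ such that $g:=\pi\circ h$ is a log resolution of $(Y,\Delta)$, and then applying G-R vanishing over $(Y,\Delta)$ to the composed Fujita or K-twisted Fujita transforms on $X'$. Functoriality of the transforms, noted immediately after Definition \ref{fuj.tr.say}, identifies these composed transforms with $h^{\mathrm{F}}(\pi^{\mathrm{F}}(D))$ and $h^{\mathrm{KF}}(\pi^{\mathrm{KF}}(D))$ up to effective $h$-exceptional divisors with $\lfloor\cdot\rfloor=0$, so the G-R template is preserved. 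Combined with Claims \ref{c.pushforward.fujita} and \ref{c.klt.Kfujtransf}, which identify the $h$-pushforwards with $\o_X(K_X-\pi^{\mathrm{F}}(D))$ and $\o_X(\pi^{\mathrm{KF}}(D))$ respectively, the Leray spectral sequence converts $R^ig_*=0$ into $R^i\pi_*=0$, which is the conclusion sought.

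I expect the sole obstacle to be pedantic rather than conceptual: keeping the bookkeeping of effective $h$-exceptional contributions straight so that the boundary stays $\leq\Delta$ under pushforward and its round-down stays zero on exceptional components. Since both $\Delta_{X,D}$ and $\Delta^c_{X,D}$ are already boundaries with $\lfloor\ex(\cdot)\rfloor=0$ by construction, and since log resolutions contribute only exceptional divisors with arbitrarily small coefficients, this bookkeeping is routine.
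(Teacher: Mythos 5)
Your second paragraph is exactly the paper's proof: the claim carries its \qed in the statement because the authors regard it as following ``by comparing the definitions,'' and the two verifications you give --- $\pi^{\rm KF}(D)\sim_{\pi,\r}K_X+\Delta^c_{X,D}$ with $\pi_*\Delta^c_{X,D}=\Delta-\Delta_D\leq\Delta$ and $\rdown{\ex(\Delta^c_{X,D})}=0$, and $K_X-\pi^{\rm F}(D)\sim_{\pi,\r}K_X+\Delta_{X,D}$ with $\pi_*\Delta_{X,D}=\Delta_D\leq\Delta$ and $\rdown{\ex(\Delta_{X,D})}=0$ --- are precisely that comparison. So the core of your argument is correct and identical to the paper's.

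Your third paragraph, however, addresses a problem the claim does not actually pose, and in doing so introduces the only shaky step. Everywhere the claim is invoked (Theorem~\ref{cm.crit.thm} via Theorem~\ref{g-r.impl.other.thm}, and Conjecture~\ref{fuj.tr.say.conj}) the morphism $\pi$ is itself a log resolution of $(Y,\Delta)$, which is exactly the situation in which Definition~\ref{g-r.prop.defn} delivers $R^i\pi_*=0$ directly; no auxiliary $h\colon X'\to X$ is needed. If you do want the statement for a general normal $X$, your bridge is incomplete as written: from $R^{i}(\pi\circ h)_*\sF=0$ the Leray spectral sequence gives for free only the injection $R^1\pi_*(h_*\sF)\hookrightarrow R^1(\pi\circ h)_*\sF$; for $i\geq 2$ one also needs $R^qh_*\sF=0$ for $0<q<i$, which is exactly the inductive argument of Theorem~\ref{fuj.tr.say.conj.reg.pf} and is established there only for morphisms between log resolutions. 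Likewise Claim~\ref{c.klt.Kfujtransf} identifies $h_*$ of the composed transform with $\o_X(\pi^{\rm KF}(D))$ only under a klt, or dlt-and-thrifty, hypothesis on $X$. So either drop the third paragraph and record (as the paper implicitly does) that $\pi$ is a log resolution, or acknowledge that the ``routine bookkeeping'' there is really the inductive Leray argument of Section~\ref{bir.inv.subsect} together with extra hypotheses on $X$.
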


To get a CM criterion out of the above claims, we use the method of two spectral sequences  (see \cite{k-dep} or \cite[7.27]{kk-singbook}) combined with the duality  between local cohomology and higher direct images.
Note that \cite[10.44]{kk-singbook} states the needed duality  for locally free sheaves. In our case one needs to use the Cohen-Macaulay version as in 
\cite[5.71]{km-book}. 
The end result is  the following.
% which implies (\ref{g-r.impl.other.thm}.3). 

\begin{thm}[CM-criterion] \label{cm.crit.thm}   
	Let $\pi \colon X\to Y$ be a  proper birational morphism between integral schemes. 
	Let $D$ be  a Weil $\z$-divisor  and $\Delta$ an   $\r$-divisor on $Y$
	such that $D+\Delta$ is $\r$-Cartier.
	Assume that
	\begin{enumerate}
		\item  $X$ is normal and CM, 
		\item $\o_X\bigl(\pi^{\rm F}(D)\bigr)$ and $\o_X\bigl(\pi^{\rm KF}(D)\bigr)$ are CM.
		\item $R^i\pi_*\o_X\bigl(K_X-\pi^{\rm F}(D)\bigr)=0$ for $i>0$, 
		\item $ R^i\pi_*\o_X\bigl(\pi^{\rm KF}(D)\bigr)=0$ for $i>0$, and 
		\item $\pi^{\rm KF}(D)-\pi^{\rm F}(D)$ is effective.
	\end{enumerate}
	Then $ \o_Y(D)$ is CM. 
\end{thm}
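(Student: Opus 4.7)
The plan is to set $d:=\dim X=\dim Y$ and use local duality for Cohen-Macaulay sheaves (\cite[5.71]{km-book}) to reduce the CM property of $\o_Y(D)$ to the statement that $R\shom_Y(\o_Y(D),\omega_Y^\bullet)$ is concentrated in cohomological degree $-d$. To achieve this, I will realize $\o_Y(D)$ as a derived direct summand of $R\pi_*\o_X(\pi^{\rm F}(D))$, and then compute the Grothendieck dual of the latter using hypotheses (2) and (3).

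\emph{Splitting step.} Hypothesis (5) gives $\pi^{\rm F}(D)\le \pi^{\rm KF}(D)$, so there is a sheaf inclusion $\o_X(\pi^{\rm F}(D))\hookrightarrow \o_X(\pi^{\rm KF}(D))$. Its derived pushforward is a morphism
\[
R\pi_*\o_X(\pi^{\rm F}(D))\longrightarrow R\pi_*\o_X(\pi^{\rm KF}(D))
\]
in $D^b(Y)$ whose target coincides with $\pi_*\o_X(\pi^{\rm KF}(D))=\o_Y(D)$ by hypothesis (4) together with Claim~\ref{c.klt.Kfujtransf} (whose pushforward identity needs only the effectivity provided by (5)); Claim~\ref{c.pushforward.fujita} also gives $H^0(R\pi_*\o_X(\pi^{\rm F}(D)))=\o_Y(D)$. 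Precomposing with the truncation map $\o_Y(D)\to R\pi_*\o_X(\pi^{\rm F}(D))$ produces an endomorphism of $\o_Y(D)$ in $D^b(Y)$; since $\Hom_{D(Y)}(\o_Y(D),\o_Y(D))=\Hom_Y(\o_Y(D),\o_Y(D))$, this endomorphism is determined by its action on $H^0$, which is tautologically the identity. Idempotent completeness of $D^b(Y)$ then realizes $\o_Y(D)$ as a direct summand of $R\pi_*\o_X(\pi^{\rm F}(D))$.

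\emph{Duality step.} By hypothesis (2), $R\shom_X(\o_X(\pi^{\rm F}(D)),\omega_X^\bullet)\simeq \o_X(K_X-\pi^{\rm F}(D))[d]$. So the Cohen-Macaulay form of Grothendieck duality for $\pi$---the combination of \cite[10.44]{kk-singbook} with \cite[5.71]{km-book}---yields
\[
R\shom_Y\bigl(R\pi_*\o_X(\pi^{\rm F}(D)),\omega_Y^\bullet\bigr)\simeq R\pi_*\o_X(K_X-\pi^{\rm F}(D))[d],
\]
which by hypothesis (3) collapses to $\pi_*\o_X(K_X-\pi^{\rm F}(D))[d]$, a sheaf placed in degree $-d$. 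Since $R\shom_Y(-,\omega_Y^\bullet)$ preserves direct sums, the summand $R\shom_Y(\o_Y(D),\omega_Y^\bullet)$ is likewise concentrated in degree $-d$, and local duality finishes the proof.

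I expect the splitting step to be the main obstacle: hypotheses (4) and (5) are introduced precisely to build the section-retraction pair for $\o_Y(D)$ inside $R\pi_*\o_X(\pi^{\rm F}(D))$, and the ``two spectral sequences'' formalism of \cite[7.27]{kk-singbook} provides an equivalent packaging of this degeneration---one spectral sequence exploits the Cohen-Macaulay input (2) together with the vanishing (3), while the other controls the $R^i\pi_*$ of $\o_X(\pi^{\rm KF}(D))$ via (4), the effectivity condition (5) ensuring the two match up on $H^0$.
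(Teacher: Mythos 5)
Your proof is correct and is, at bottom, the same argument as the paper's: hypotheses (4) and (5) are used to split $\o_Y(D)$ off of $R\pi_*\o_X\bigl(\pi^{\rm F}(D)\bigr)$, while hypotheses (2) and (3) together with duality show that the latter object has no cohomology away from the single relevant degree. The paper packages this as a commutative square of local cohomology groups at each closed point $p\in Y$ --- the map $H^i_p\bigl(Y,\pi_*\o_X(\pi^{\rm F}(D))\bigr)\to H^i_W\bigl(X,\o_X(\pi^{\rm KF}(D))\bigr)$ is an isomorphism by (4)--(5) yet factors through $H^i_W\bigl(X,\o_X(\pi^{\rm F}(D))\bigr)$, which vanishes below the top degree by (2)--(3) and duality --- and this square is precisely the image of your derived splitting under $R\Gamma_p$, so the two write-ups differ only in whether duality is invoked globally (Grothendieck duality) or pointwise (local duality at closed points).
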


Note that assumptions (1--2) are automatic if $X$ is a regular scheme. 

\begin{proof}
	Let $p \in Y$ be a closed point and let $W:= \pi^{-1}(p)$.
	We have the following commutative diagram of local cohomology groups:
	\begin{equation*}
		\xymatrix{
			H^i_p(Y, \pi_*\mathcal{O}_X(\pi^{\rm KF}(D)))  \ar[r]^{\alpha_i} &  H^i_{W}(X, \mathcal{O}_X(\pi^{\rm KF}(D))) \\
			H^i_p(Y, \pi_*\mathcal{O}_X(\pi^{\rm F}(D))) \ar[u]^{\beta_i} \ar[r] &H^i_W(X, \mathcal{O}_X(\pi^{\rm F}(D))) \ar[u]
		}
	\end{equation*}
	Here $\alpha_i$ is an isomorphism by hypothesis (4) and the Leray spectral sequence for local cohomology, and $\beta_i$ is an isomorphism by hypothesis (5).
	By \cite[10.44]{kk-singbook} we have $H^i_{W}(X, \mathcal{O}_X(\pi^{\rm F}(D))) \simeq (R^{n-i}\pi_*\mathcal{O}_X(K_X-\pi^{\rm F}(D)))_p$, which vanishes by (3) for $i<n$.
	From these  we deduce that $H^i_p(Y, \pi_*\mathcal{O}_X(\pi^{\rm F}(D)))=0$, and we conclude by noting that $\pi_*\mathcal{O}_{X}(\pi^{\rm KF}(D))=\pi_*\mathcal{O}_X(\pi^{\rm F}(D))=\mathcal{O}_Y(D)$ by hypothesis (4) and Claim~\ref{c.pushforward.fujita}.
\end{proof}

\subsection{Birational invariance of higher direct images}\label{bir.inv.subsect} 

\begin{conj} \label{fuj.tr.say.conj}  Let $Y$ be a  normal excellent scheme, 
	$D$ a Weil $\z$-divisor on $Y$ and $\Delta_D$ a  Weil $\r$-divisor  whose coefficients are in $[0,1]$. Assume that $D+\Delta_D$ is $\r$-Cartier. 
	Let $\pi\colon X\to Y$ be a proper, birational log resolution of $(Y, D+\Delta_D)$, and $\pi^{\rm KF}(D)$ the K-twisted Fujita transform.
	
	Then  $\mathbf{R} \pi_*\o_X\bigl(\pi^{\rm KF}(D)\bigr)$ is independent of $X$.
\end{conj}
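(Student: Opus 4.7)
The plan is to follow the classical reduction-to-common-refinement strategy. Given two log resolutions $\pi_1\colon X_1\to Y$ and $\pi_2\colon X_2\to Y$, I would first construct a third log resolution $W$ dominating both, with factorizations $\phi_i\colon W\to X_i$ satisfying $\pi_W=\pi_i\circ\phi_i$. The Leray spectral sequence gives $\mathbf{R}(\pi_W)_* = \mathbf{R}(\pi_i)_* \circ \mathbf{R}(\phi_i)_*$, so by symmetry the problem reduces to showing that for any morphism $\phi\colon W\to X$ between log resolutions over $Y$, one has
\[
\mathbf{R}\phi_*\o_W\bigl(\pi_W^{\mathrm{KF}}(D)\bigr) \cong \o_X\bigl(\pi_X^{\mathrm{KF}}(D)\bigr).
\]

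Next I would establish that the K-twisted Fujita transform is compatible with composition. Setting $D_X:=\pi_X^{\mathrm{KF}}(D)$, which satisfies $D_X\sim_{\pi_X,\r}K_X+\Delta^c_{X,D}$, the idea is to view the pair $(X,\Delta^c_{X,D}+\Delta_{X,D})$ as the new base and identify $\pi_W^{\mathrm{KF}}(D)$ with $\phi^{\mathrm{KF}}(D_X)$ computed on $W$ relative to $\phi$ with the induced boundary. This reduces to verifying the three defining properties of Definition \ref{fuj.tr.say} for the candidate divisor: the integer pushforward condition is automatic, the relative linear equivalence follows by transitivity from the relations on $X$ and $W$, and the exceptional integrality condition $\rdown{\ex(\Delta^c_{W,D})}=0$ uses only that the pushforward of $\Delta^c_{W,D}$ via $\phi$ equals $\Delta^c_{X,D}$, whose own floor is zero.

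With this identification in hand, the remaining step is to show $\mathbf{R}\phi_*\o_W\bigl(\phi^{\mathrm{KF}}(D_X)\bigr)\cong \o_X(D_X)$ for a birational morphism $\phi$ between log resolutions. Since both $W$ and $X$ are regular, the pushforward identity $\phi_*\o_W(\phi^{\mathrm{KF}}(D_X))=\o_X(D_X)$ follows from the argument of Claim~\ref{c.klt.Kfujtransf} as the klt hypothesis is automatic. The higher vanishing $R^i\phi_*\o_W(\phi^{\mathrm{KF}}(D_X))=0$ for $i>0$ is precisely the instance of G-R vanishing identified in Claim~\ref{c.klt.Kfujtransf.3}, and holds by \cite[Theorem 1]{CR15} because $W$ and $X$ are regular excellent.

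The principal obstacle is the hypothesis mismatch between the conjecture and Definition \ref{fuj.tr.say}: the conjecture only assumes $D+\Delta_D$ is $\r$-Cartier, while the K-twisted Fujita transform requires an auxiliary boundary $\Delta\geq \Delta_D$ with $K_Y+\Delta$ being $\r$-Cartier. One should either state the conjecture with this extra assumption built in, or verify locally on $Y$ that such a $\Delta$ exists, so that the comparison is globally well-posed. A secondary subtlety is ensuring that the candidate divisor identified on $W$ via composition has strictly no integral exceptional components; this hinges on the fact that successive log resolutions between regular schemes factor through smooth blowups, so no new divisor can acquire an integer coefficient without violating the assumption on $X$. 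Once these points are clarified, the argument above should go through.
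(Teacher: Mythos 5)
Your overall architecture (pass to a common refinement, then reduce to a single birational morphism $\phi\colon W\to X$ between log resolutions together with functoriality of the K-twisted Fujita transform) is parallel in spirit to the paper's reduction, but the final step contains a genuine gap, and it is precisely the entire content of the theorem. You assert that $R^i\phi_*\o_W\bigl(\phi^{\rm KF}(D_X)\bigr)=0$ for $i>0$ ``holds by \cite[Theorem 1]{CR15} because $W$ and $X$ are regular excellent.'' That theorem gives only $R^i\phi_*\omega_W=0$ (and $R^i\phi_*\o_W=0$); it says nothing about twisted sheaves $\o_W(D_W)$ with $D_W\sim_{\phi,\r}K_W+\Delta'$, where $\Delta'$ is effective with $\rdown{\ex(\Delta')}=0$. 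In characteristic $0$ one would deduce the twisted statement from the untwisted one via cyclic covers or relative Kawamata--Viehweg vanishing, but exactly those tools fail in positive characteristic, and the twisted vanishing for morphisms between regular excellent schemes is Conjecture~\ref{fuj.tr.say.conj.reg} itself --- the hard part of the statement. The paper does not prove it unconditionally: Theorem~\ref{fuj.tr.say.conj.reg.pf} establishes it only under the Resolution assumption~\ref{res.ass}, by first verifying the vanishing through an elementary direct computation for a single non-singular blow-up (hence for compositions of such), and then --- since weak factorization is unknown in positive characteristic --- running Hironaka's double induction: dominate $\sigma\colon X\to X'$ by $\tau=\sigma\circ\pi\colon Z\to X'$ with $\tau$ a composition of non-singular blow-ups, and bootstrap $R^{j+1}\sigma_*=0$ from $R^{j+1}\tau_*=0$ via the Leray spectral sequence, using the inductive vanishing of $R^q\pi_*$ for $1\le q\le j$ to kill the interfering terms. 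Your proposal contains no substitute for this mechanism, so at the key step it is circular.

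Two smaller points. First, your justification that $\rdown{\ex(\Delta^c_{W,D})}=0$ ``uses only that the pushforward of $\Delta^c_{W,D}$ via $\phi$ equals $\Delta^c_{X,D}$, whose own floor is zero'' is backwards: floors do not descend from pushforwards. The correct reason is that every $\phi$-exceptional divisor is also $\pi_W$-exceptional, so the condition is inherited from clause (3') of Definition~\ref{fuj.tr.say} applied over $Y$. Second, your observation about the hypothesis mismatch (the K-twisted transform requires $K_Y+\Delta$ to be $\r$-Cartier for some $\Delta\ge\Delta_D$, which the conjecture does not state explicitly) is a fair reading of the paper, but resolving it does not repair the main gap above.
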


It is possible that this can be proved by the  methods of \cite{kov-rtl}.
Here we have a more modest aim, to prove that resolution of singularities implies Conjecture~\ref{fuj.tr.say.conj}.  Let us first state what we need about the existence of resolutions.

\begin{say}[Resolution assumption]\label{res.ass}  Fix an integral scheme $Y$. 
	Let $g \colon X\to Y$ be a proper, birational morphism.
	Assume that $(X, D)$ is  an snc pair. Let
	$g'\colon X'\to Y$ be a proper, birational morphism. 
	Then there is a sequence of blow-ups
	$$
	X_m\stackrel{\tau_m}{\to} X_{m-1}\to \cdots \to X_1\stackrel{\tau_1}{\to} X_0=X
	$$
	such that
	\begin{enumerate}
		\item the center of each blow-up is a non-singular subvariety that has simple normal crossing with the total transform of $D$, (these are called {\it  non-singular blow-ups}),  and
		\item the induced map  $X_m\map X'$ is a morphism.
	\end{enumerate}
\end{say}

\medskip

The usual Leray spectral sequence argument reduces Conjecture~\ref{fuj.tr.say.conj} to the case when
$(Y, D+\Delta_D)$ is an snc pair. 

\begin{conj} \label{fuj.tr.say.conj.reg}  G-R vanishing holds over snc pairs. That is, using the notation of (\ref{fuj.tr.say.conj}), assume in addition that 
	$(Y, D+\Delta_D)$ is snc.
	Then  
	$$
	R^i \pi_*\o_X\bigl(\pi^{\rm KF}(D)\bigr)=0 \qtq{for} i>0.
	\eqno{(\ref{fuj.tr.say.conj.reg}.1)}
	$$
\end{conj}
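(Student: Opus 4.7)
The plan is to prove Conjecture~\ref{fuj.tr.say.conj.reg} by induction on a factorization of the log resolution into non-singular blow-ups. Since $(Y, D+\Delta_D)$ is snc, $Y$ is regular and serves as its own log resolution; by the resolution assumption~\ref{res.ass}, every log resolution $\pi\colon X\to Y$ is dominated by a sequence
\[
X_m \xrightarrow{\tau_m} X_{m-1}\to\cdots\to X_1\xrightarrow{\tau_1} X_0 = Y
\]
of non-singular blow-ups along smooth centers meeting the total transform of $D + \Delta_D$ snc-ly. Writing $\pi_j\colon X_j\to Y$ for the composition, I would establish $R^i\pi_{j,*}\o_{X_j}(\pi_j^{\rm KF}(D)) = 0$ for $i>0$ by induction on $j$; the birational invariance needed to descend from $X_m$ to a given log resolution $X$ is handled independently in Section~\ref{bir.inv.subsect}.

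The base case $j=0$ is trivial. For the inductive step, apply the Leray spectral sequence to $\pi_j = \tau_j \circ \pi_{j-1}$. It suffices to verify (a) $\tau_{j,*}\o_{X_j}(\pi_j^{\rm KF}(D)) \cong \o_{X_{j-1}}(\pi_{j-1}^{\rm KF}(D))$ and (b) $R^q\tau_{j,*}\o_{X_j}(\pi_j^{\rm KF}(D)) = 0$ for $q>0$. Statement (a) is a composition-compatibility of K-twisted Fujita transforms, following from uniqueness in Definition~\ref{fuj.tr.say} together with Claim~\ref{c.pushforward.fujita} applied to $\tau_j$: the $\tau_{j,*}$-pushforward of the K-twisted data on $X_j$ satisfies all four defining properties on $X_{j-1}$.

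For statement (b), since $X_{j-1}$ is regular (by induction, as an iterated non-singular blow-up of the regular $Y$), the Weil $\z$-divisor $\pi_{j-1}^{\rm KF}(D)$ is Cartier. One may therefore write $\pi_j^{\rm KF}(D) = \tau_j^*\pi_{j-1}^{\rm KF}(D) + a E_j$ for a unique integer $a$, where $E_j$ is the exceptional divisor of $\tau_j$. By the projection formula the vanishing reduces to $R^q\tau_{j,*}\o_{X_j}(aE_j) = 0$ for $q>0$. A careful analysis of the coefficients in the relation $\pi_j^{\rm KF}(D) \sim_{\pi_j,\r} K_{X_j} + \Delta^c_{X_j,D}$---using the snc structure, the constraint that $\Delta^c_{X_j,D}$ has exceptional coefficients in $[0,1)$, and the identity $K_{X_j} = \tau_j^* K_{X_{j-1}} + (c_j-1)E_j$ where $c_j\ge 2$ is the codimension of the center of $\tau_j$---identifies $a$ as an integer in the range $\{0, 1, \dots, c_j - 1\}$. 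In this range, $R^q\tau_{j,*}\o_{X_j}(aE_j) = 0$ for $q>0$ by the classical cohomology computation on blow-ups of smooth centers, via the projective bundle structure $E_j \cong \p(\mathcal{N}_{Z_j/X_{j-1}}^{\vee})$ and the vanishing for $\o_{\p^{c_j-1}}(-a)$ with $-(c_j-1)\le -a\le 0$.

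The hardest part is the coefficient-tracking step bounding $a$ in the range $\{0, \dots, c_j-1\}$. It is routine when $\Delta_D$ has rational coefficients, since clearing denominators reduces everything to integer arithmetic, but it requires delicate real-arithmetic bookkeeping of fractional parts when $\Delta_D$ has irrational coefficients: one must verify that the integer $a$ lies in the required range regardless of the real coefficients appearing in the snc data. An alternative, potentially cleaner, route would be to directly establish a relative Kawamata--Viehweg-type vanishing for snc pairs over regular bases in arbitrary characteristic, which would subsume (b) and bypass the inductive coefficient analysis entirely.
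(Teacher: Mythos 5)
There is a genuine gap, and it sits exactly where the paper's proof does its real work. Your blow-up computation only establishes the vanishing for those $\pi$ that are themselves compositions of non-singular blow-ups of $Y$ --- this is precisely the part the paper dismisses as ``an elementary computation.'' The resolution assumption \ref{res.ass} gives you a tower $X_m\to\cdots\to X_0=Y$ together with a morphism $\rho\colon X_m\to X$, but $\rho$ is an arbitrary birational morphism between log resolutions, \emph{not} a composition of non-singular blow-ups of $X$, so your inductive computation says nothing about $R^q\rho_*$. The Leray spectral sequence for $\pi_m=\pi\circ\rho$ then gives the injection $R^i\pi_*(\rho_*F)\hookrightarrow R^i\pi_{m,*}F$ only for $i=1$; for $i\geq 2$ you need $R^q\rho_*F=0$ for $0<q<i$ first. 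Deferring this to ``birational invariance handled in Section \ref{bir.inv.subsect}'' is circular: the birational invariance (Conjecture \ref{fuj.tr.say.conj}) is \emph{deduced from} the snc case, not available independently of it, and weak factorization is unknown in positive characteristic. The paper closes this loop with Hironaka's sandwich argument: one proves, by induction on the cohomological degree $j$ and simultaneously for \emph{every} birational morphism $\sigma\colon X\to X'$ between log resolutions, that $R^i\sigma_*\o_X(D_X)=0$ for $0<i\leq j$, by factoring $Z\to X\to X'$ with $Z\to X'$ a composition of non-singular blow-ups and using the inductive hypothesis on $Z\to X$ to make the edge map $R^{j+1}\sigma_*(\pi_*\o_Z(D_Z))\to R^{j+1}\tau_*\o_Z(D_Z)=0$ injective. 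That double induction is the missing idea in your proposal.

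Two secondary points. First, your claim that $a\in\{0,\dots,c_j-1\}$ fails when the center $Z_j$ lies on $c_j$ components of the boundary with coefficient exactly $1$ (which is allowed, since $\Delta_D$ has coefficients in $[0,1]$): writing $a=c_j-1+e-\mu$ with $e=\coeff_{E_j}\Delta^c_{X_j,D}\in[0,1)$ and $\mu=\mult_{Z_j}\Delta^c_{X_{j-1},D}\leq c_j$, one can get $a=-1$. The higher direct images of $\o_{X_j}(-E_j)$ still vanish, but $\tau_{j,*}\o_{X_j}(\tau_j^*\pi_{j-1}^{\rm KF}(D)-E_j)$ is $I_{Z_j}\otimes\o(\pi_{j-1}^{\rm KF}(D))$, so your compatibility (a) breaks and the induction does not close as stated; this case needs separate treatment (it is the non-thrifty situation excluded in Claim \ref{c.klt.Kfujtransf}). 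Second, the worry about irrational coefficients is misplaced --- the bounds $e\in[0,1)$ and $\mu\leq c_j$ are insensitive to rationality --- while the proposed ``cleaner route'' via a relative Kawamata--Viehweg-type vanishing in arbitrary characteristic is not available: such vanishing is false in positive characteristic in general, and proving it for birational morphisms of regular schemes is essentially the statement at issue.
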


\begin{thm}\label{fuj.tr.say.conj.reg.pf}
	Assume the resolution assumption \ref{res.ass}.
	Then Conjectures \ref{fuj.tr.say.conj} and \ref{fuj.tr.say.conj.reg} are true. In particular, the conjectures hold in the case where $Y$ is an excellent $\mathbb{Q}$-scheme or $Y$ is an excellent scheme of dimension $d \leq 3$.
\end{thm}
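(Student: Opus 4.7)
\begin{sketch}
The plan is to observe that on an snc pair the K-twisted Fujita transform admits an explicit pullback form, so that the desired vanishing reduces to Grauert-Riemenschneider vanishing for regular excellent schemes via the projection formula. The resolution assumption is then used only to construct a common snc refinement for two log resolutions.

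\textbf{Step 1: Proof of Conjecture~\ref{fuj.tr.say.conj.reg}.} Let $(Y, D + \Delta_D)$ be snc and $\pi \colon X \to Y$ a log resolution. Since $Y$ is regular, both $D$ and $K_Y$ are Cartier. Take $\Delta := \Delta_D$ in Definition~\ref{fuj.tr.say}, so $\Delta^c_D = 0$. I claim that the pair $\bigl(K_X + \pi^*(D - K_Y),\, 0\bigr)$ satisfies conditions (1')--(4') of the K-twisted Fujita transform: the pushforward is $\pi_*K_X + (D - K_Y) = K_Y + (D - K_Y) = D$; the zero divisor trivially has zero pushforward and zero round-down exceptional; and $K_X + \pi^*(D - K_Y) \sim_{\pi, \r} K_X$ because $\pi^*(D - K_Y)$ is $\pi$-trivial. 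By uniqueness,
\[
\pi^{\mathrm{KF}}(D) = K_X + \pi^*(D - K_Y), \qquad \o_X(\pi^{\mathrm{KF}}(D)) \cong \omega_X \otimes \pi^* \o_Y(D - K_Y).
\]
By the projection formula together with Grauert-Riemenschneider vanishing for regular excellent schemes \cite[Theorem~1]{CR15} and the standard identification $\pi_* \omega_X = \omega_Y$ for a proper birational morphism of regular schemes,
\[
\mathbf{R}\pi_* \o_X(\pi^{\mathrm{KF}}(D)) \cong (\mathbf{R}\pi_* \omega_X) \otimes \o_Y(D - K_Y) \cong \omega_Y \otimes \o_Y(D - K_Y) \cong \o_Y(D),
\]
concentrated in degree zero. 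This yields $R^i \pi_* \o_X(\pi^{\mathrm{KF}}(D)) = 0$ for $i > 0$.

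\textbf{Step 2: Conjecture~\ref{fuj.tr.say.conj} from the snc case.} Given two log resolutions $\pi_i \colon X_i \to Y$ ($i = 1, 2$), apply the resolution assumption~\ref{res.ass} to the snc pair $(X_1, D_1)$ and the morphism $\pi_2$ to obtain a sequence of non-singular blowups $\sigma_1 \colon X_3 \to X_1$ and a morphism $\sigma_2 \colon X_3 \to X_2$. Then $X_3$ is snc, and the total transform on $X_3$ of the snc pair $(Y, D + \Delta_D)$ agrees whether pulled back through $X_1$ or $X_2$, so $\sigma_i$ is a log resolution of the snc pair $(X_i, D_i + \Delta_{D, i})$ for $i = 1, 2$. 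Applying Step~1 to each $\sigma_i$ (with the Weil divisor $\pi_i^{\mathrm{KF}}(D)$ on the snc base $X_i$) gives
\[
\mathbf{R}\sigma_{i*} \o_{X_3}\bigl(\sigma_i^{\mathrm{KF}}(\pi_i^{\mathrm{KF}}(D))\bigr) \cong \o_{X_i}(\pi_i^{\mathrm{KF}}(D)).
\]
By the functoriality of the K-twisted Fujita transform under composition, $\sigma_i^{\mathrm{KF}}(\pi_i^{\mathrm{KF}}(D)) = \pi_3^{\mathrm{KF}}(D)$ where $\pi_3 := \pi_i \circ \sigma_i$ is the common map $X_3 \to Y$. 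The Leray spectral sequence then yields
\[
\mathbf{R}\pi_{3*} \o_{X_3}(\pi_3^{\mathrm{KF}}(D)) \cong \mathbf{R}\pi_{i*} \o_{X_i}(\pi_i^{\mathrm{KF}}(D)),
\]
independent of $i \in \{1, 2\}$, establishing the birational invariance.

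\textbf{Step 3: Applications.} The resolution assumption~\ref{res.ass} is verified for excellent $\mathbb{Q}$-schemes by Temkin's functorial embedded resolution and for excellent schemes of dimension $\leq 3$ by the work of Cossart-Piltant (together with classical resolution in lower dimensions). In both regimes Conjectures~\ref{fuj.tr.say.conj} and~\ref{fuj.tr.say.conj.reg} hold.

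\textbf{Main obstacle.} The substantial external input is Grauert-Riemenschneider vanishing for regular excellent schemes \cite[Theorem~1]{CR15}; everything else is either the clean identification $\pi^{\mathrm{KF}}(D) = K_X + \pi^*(D - K_Y)$ on an snc log resolution or a Leray bookkeeping argument. The only nontrivial use of the resolution assumption is in constructing, in Step~2, a common snc refinement $X_3$ dominating both $X_1$ and $X_2$ in a manner compatible with the snc structures.
\end{sketch}
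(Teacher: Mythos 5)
There is a genuine gap, and it is in Step 1. Your identification $\pi^{\mathrm{KF}}(D) = K_X + \pi^*(D-K_Y)$ is correct, but only under your choice $\Delta = \Delta_D$, which (since $D-K_Y$ is Cartier on the regular scheme $Y$) forces $\Delta^c_{X,D}=0$. Conjecture \ref{fuj.tr.say.conj.reg} has to encode G-R vanishing over snc pairs in the sense of Definition \ref{g-r.prop.defn}, that is, the vanishing of $R^i\pi_*\o_X(D')$ for every $D'\sim_{\pi,\r}K_X+\Delta'$ with $\pi_*\Delta'\le\Delta$ and $\rdown{\ex(\Delta')}=0$; equivalently, the K-twisted Fujita transform must be taken with an arbitrary $\Delta\ge\Delta_D$ (this is also how it is used in Claim~\ref{c.klt.Kfujtransf.3}, and it is forced in Conjecture~\ref{fuj.tr.say.conj} where $D-K_Y$ need not be $\r$-Cartier on a singular $Y$). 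In that generality $\pi^{\mathrm{KF}}(D)=K_X+\Delta^c_{X,D}+\pi^*\bigl((D+\Delta_D)-(K_Y+\Delta)\bigr)$ involves the round-up of a fractional pullback plus fractional strict transforms, so $\o_X(\pi^{\mathrm{KF}}(D))$ is \emph{not} of the form $\omega_X\otimes\pi^*(\text{line bundle})$, and the projection formula plus \cite[Theorem 1]{CR15} does not apply. What is needed is a relative Kawamata--Viehweg--type statement, strictly stronger than Grauert--Riemenschneider for $\omega_X$; this is precisely why the paper verifies the vanishing by a direct computation for a single non-singular blow-up (where the relevant twists by multiples of the exceptional divisor are computed explicitly on the projective bundle) and then extends it to an arbitrary morphism $\sigma\colon X\to X'$ between log resolutions by Hironaka's trick: an induction on the cohomological degree $j$, using Assumption~\ref{res.ass} to produce $\tau\colon Z\to X\to X'$ with $\tau$ a composition of non-singular blow-ups, and the Leray inclusion $R^{j+1}\sigma_*(\pi_*\o_Z(D_Z))\into R^{j+1}\tau_*\o_Z(D_Z)=0$.

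The same issue propagates into Step 2. The functoriality $\sigma_i^{\mathrm{KF}}(\pi_i^{\mathrm{KF}}(D))=\pi_3^{\mathrm{KF}}(D)$ holds only when the transform on $X_i$ is taken with respect to the induced boundary $\Delta^c_{X_i,D}$, which for singular $Y$ is a nonzero fractional divisor. With your Step 1 convention (trivial boundary on $X_i$) one gets instead $K_{X_3}+\sigma_i^*\bigl(\pi_i^{\mathrm{KF}}(D)-K_{X_i}\bigr)=\pi_3^{\mathrm{KF}}(D)+F_i$, where $F_i=\sigma_i^*\Delta^c_{X_i,D}-\Delta^c_{X_3,D}$ is an effective exceptional $\z$-divisor that is in general nonzero and depends on $i$; so the two objects you compare via Leray are pushforwards of different sheaves on $X_3$. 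Your use of Assumption~\ref{res.ass} to build a common refinement, and the observation that $\ex(\sigma_2)$ sits inside the snc total transform, are fine and close in spirit to the paper's argument; but without a version of Step 1 valid for arbitrary fractional boundaries the reduction to \cite{CR15} cannot close either conjecture.
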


\begin{proof}
	By an elementary computation,  (\ref{fuj.tr.say.conj.reg}.1) holds
	if $\pi$ is a  non-singular blow-up.  Thus it also holds for compositions of
	non-singular blow-ups, more generally, for any birational map that is a composition of non-singular blow-ups and blow-downs. The Weak Factorization Conjecture asserts that every   birational map between non-singular varieties is such, but this is not known in positive characteristic. 
	We go around this using an unpublished argument of 
	Hironaka  (cf.\  \cite[p.153]{main-res-paper}). 
	
	To simplify notation, let $D_X$ denote the K-twisted Fujita transform of $D$ on $X$.
	By induction on $j\geq 1$ we prove that
	$$
	R^i\sigma_*\o_{X}\bigl(D_X\bigr)=0\qtq{for} j\geq i>0,
	\eqno{(\ref{fuj.tr.say.conj.reg.pf}.1)}
	$$
	for any birational morphism $\sigma \colon X\to X'$ between log resolutions of 
	$(Y, D+\Delta_D)$.
	
	For $j=1$, given any $\sigma \colon X\to X'$,
	we use our Resolution assumption~\ref{res.ass}  to get
	$$
	\tau\colon Z\stackrel{\pi}{\to} X \stackrel{\sigma}{\to} X',
	$$
	where $\tau$ is a composition of  non-singular blow-ups.
	
	We know that (\ref{fuj.tr.say.conj.reg}.1) holds for $\tau$. 
	The Leray spectral sequence 
	$$
	R^p\sigma_* (R^q\pi_*\o_Z\bigl(D_Z\bigr)) \Rightarrow R^{p+q}\tau_*\o_Z\bigl(D_Z\bigr)
	$$
	gives the inclusion $R^1 \sigma_*(\pi_*\mathcal{O}_Z(D_Z)) = R^1 \sigma_* (\mathcal{O}_X(D_X)) \hookrightarrow R^1 \tau_* \mathcal{O}_Z(D_Z)=0.$ So we prove \ref{fuj.tr.say.conj.reg.pf}.1 for $j=1$.
	
	Now assume \ref{fuj.tr.say.conj.reg.pf}.1 holds for $j$ and we prove it for $j+1$. Again by the Leray spectral sequence we have the inclusion 
	$$
	R^{j+1}\sigma_*\o_X\bigl(D_X\bigr)=R^{j+1}\sigma_*(\pi_*\o_Z\bigl(D_Z\bigr))\into R^{j+1}\tau_*\o_Z\bigl(D_Z\bigr)=0. 
	$$
	Thus (\ref{fuj.tr.say.conj.reg.pf}.1) also holds for $j+1$. 
	
	Finally, Assumption \ref{res.ass} is known to hold for varieties in characteristic 0 by \cite{main-res-paper} and for excellent
	threefolds \cite{CJS20}.
\end{proof}

\subsection{Rational singularities}

We recall the definition of rational singularities, following \cite{kov-rtl}.

\begin{defn} \label{d-rat}
	A scheme $X$ has \emph{rational singularities} if
	\begin{enumerate}
		\item $X$ is a normal, excellent, Cohen-Macaulay scheme admitting a dualising complex;
		\item  for every locally projective, birational morphism $\pi \colon 
		\widetilde{X} \to X$ where $\widetilde{X}$ is an excellent Cohen-Macaulay scheme, the natural morphism  $\o_X \to \mathbf{R}{\pi}_* \o_{\widetilde{X}} $ is an isomorphism.
	\end{enumerate} 
\end{defn}

If $X$ has a  resolution of singularities, then, by \cite[Corollary 9.11]{kov-rtl}, it is enough to check condition (2) in Definition~\ref{d-rat} for a resolution;  recovering the traditional notion of rational singularities.

In positive and mixed characteristics it is necessary to assume that $X$ is Cohen-Macaulay, but
in characteristic $0$ it can be deduced from the other properties as an application of the Grauert-Riemenschneider vanishing theorem.

\medskip

The following descent property is very similar to  \cite[Theorem~9.12]{kov-rtl}.

\begin{prop} \label{rationalsing}
	Let $\pi \colon Y \rightarrow X$ be a morphism of normal excellent schemes. Assume that 
	\begin{enumerate}
		\item $Y$ has rational singularities,
		\item the natural morphism  $\mathcal{O}_X \rightarrow \mathbf{R}\pi_* \o_Y$ splits in $D(X)$, \label{h-2}
		\item $X$ is a Cohen-Macaulay scheme admitting a dualising complex.
	\end{enumerate}
	Then $X$ has rational singularities.
\end{prop}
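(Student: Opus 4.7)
The plan is to verify both conditions of Definition~\ref{d-rat} for $X$. Condition (1) is immediate from hypothesis (3) combined with the given normality and excellence of $X$; all the content sits in condition~(2). Thus, fix a locally projective birational morphism $\pi' \colon \widetilde{X} \to X$ with $\widetilde{X}$ excellent Cohen-Macaulay, and one must show the natural map $\mathcal{O}_X \to \mathbf{R}\pi'_* \mathcal{O}_{\widetilde{X}}$ is an isomorphism in $D(X)$.

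The first step is to propagate the derived splitting from $\pi$ to $\pi'$ by forming a common refinement. I would choose an excellent Cohen-Macaulay scheme $Z$ fitting into a commutative square with proper morphisms $\alpha \colon Z \to Y$ and $\beta \colon Z \to \widetilde{X}$ satisfying $\pi\alpha = \pi'\beta$ (for instance, via an alteration or Cohen-Macaulayfication of a suitable component of the fibre product $Y \times_X \widetilde{X}$, using the excellence hypotheses). Since $Y$ has rational singularities, condition~(2) of Definition~\ref{d-rat} applied to $\alpha$ gives $\mathcal{O}_Y \simeq \mathbf{R}\alpha_* \mathcal{O}_Z$, hence
$$\mathbf{R}\pi_* \mathcal{O}_Y \simeq \mathbf{R}(\pi\alpha)_* \mathcal{O}_Z = \mathbf{R}(\pi'\beta)_* \mathcal{O}_Z \simeq \mathbf{R}\pi'_* \mathbf{R}\beta_* \mathcal{O}_Z.$$
The natural composite $\mathcal{O}_X \to \mathbf{R}\pi'_* \mathcal{O}_{\widetilde{X}} \to \mathbf{R}\pi'_* \mathbf{R}\beta_* \mathcal{O}_Z$ is then identified with $\mathcal{O}_X \to \mathbf{R}\pi_* \mathcal{O}_Y$, which is split in $D(X)$ by hypothesis~(2). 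A left inverse of a composite yields a left inverse of the first factor, so $\mathcal{O}_X \to \mathbf{R}\pi'_* \mathcal{O}_{\widetilde{X}}$ is itself split; writing the splitting out gives a decomposition $\mathbf{R}\pi'_* \mathcal{O}_{\widetilde{X}} \simeq \mathcal{O}_X \oplus C$ in $D(X)$, where $C$ is a bounded complex whose cohomology sheaves live in degrees $[1,\dim X -1]$ and are supported on the exceptional locus of $\pi'$.

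The second, decisive step is to conclude $C = 0$. Since both $X$ and $\widetilde{X}$ are Cohen-Macaulay with dualizing complexes, Grothendieck duality applies to $\pi'$, and applying $\mathbf{R}\shom_X(-, \omega_X^\bullet)$ to the decomposition above yields
$$\mathbf{R}\pi'_* \omega_{\widetilde{X}}^\bullet \simeq \omega_X^\bullet \oplus \mathbf{R}\shom_X(C, \omega_X^\bullet),$$
with both dualizing complexes concentrated as shifts of sheaves in degree $-n = -\dim X$. Comparing cohomology in degree $-n$ identifies the projection onto the first summand with the natural trace $\pi'_* \omega_{\widetilde{X}} \to \omega_X$; this trace is injective (generic isomorphism between torsion-free rank-one sheaves), and the direct-summand decomposition forces it to be surjective, hence an isomorphism, so $\mathcal{H}^{-n}(\mathbf{R}\shom_X(C,\omega_X^\bullet)) = 0$. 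The remaining cohomological degrees are then pinned down by the Cohen-Macaulay bounds on $\mathcal{E}xt$-sheaves (arising from the finite injective dimension of $\omega_X$), forcing $\mathbf{R}\shom_X(C,\omega_X^\bullet) = 0$; reflexivity of the dualizing complex delivers $C = 0$ and hence the required isomorphism.

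The main obstacle I expect is precisely this last step, which is the technical heart of the argument: one must convert the existence of a \emph{derived} splitting into the honest vanishing of every higher direct image $R^i\pi'_* \mathcal{O}_{\widetilde{X}}$. The approach closely parallels \cite[Theorem~9.12]{kov-rtl}, and the Cohen-Macaulay hypothesis on $X$ is indispensable here---without it one only recovers a Du~Bois or derived-category version of rationality, rather than the strict version recorded in Definition~\ref{d-rat}.
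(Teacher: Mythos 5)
Your first step (propagating the splitting through a common Cohen--Macaulay refinement) and the identification of the degree $-n$ piece with the trace map $\pi'_*\omega_{\widetilde X}\to\omega_X$ are sound and essentially reproduce the paper's argument; note only that the refinement should be produced by Kawasaki's Macaulayfication of a birational modification (as the paper does), not by an alteration, which is merely generically finite and so would not let you apply Definition~\ref{d-rat}(2) to $\alpha$. The genuine gap is the last step. Writing $\mathbf{R}\pi'_*\o_{\widetilde X}\simeq\o_X\oplus C$, the dual $\mathbf{R}\shom_X(C,\omega_X^\bullet)$ has $\mathcal{H}^{-n+j}$ equal to $R^j\pi'_*\omega_{\widetilde X}$ for $j\geq 1$, while its $\mathcal{H}^{-n}$ vanishes \emph{automatically}: the sheaves $\mathcal{H}^i(C)=R^i\pi'_*\o_{\widetilde X}$, $i\geq 1$, are supported in dimension $\leq n-i-1$, so the relevant $\sext$-sheaves only populate degrees $\geq -n+1$. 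Thus your degree-$(-n)$ computation (the trace being an isomorphism) carries no information about $C$, and the assertion that ``the remaining cohomological degrees are pinned down by the Cohen--Macaulay bounds, forcing $\mathbf{R}\shom_X(C,\omega_X^\bullet)=0$'' is precisely the claim that $R^j\pi'_*\omega_{\widetilde X}=0$ for all $j\geq 1$, i.e.\ Grauert--Riemenschneider vanishing for an arbitrary $\pi'$. That is the kind of statement that fails in positive characteristic and certainly does not follow formally from finite injective dimension of $\omega_X$; no purely homological bookkeeping will close this.

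The paper sidesteps this entirely: it does not attempt to verify Definition~\ref{d-rat}(2) directly, but instead proves only that $X$ is \emph{pseudo-rational} (the trace $\varphi_*\omega_{\widetilde X}\to\omega_X$ is an isomorphism for every projective birational $\varphi$), exactly the part of your argument that works, and then invokes \cite[Corollary 9.11]{kov-rtl} to upgrade pseudo-rationality to rational singularities in the derived sense for normal, Cohen--Macaulay excellent schemes with a dualizing complex. That corollary (resting on the Lipman--Teissier theory of pseudo-rationality) is the substantive bridge your proof is missing; you should either cite it, as the paper does, or supply its proof, which is far from the formal duality step you sketch.
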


\begin{proof}
	The proof closely follows \cite[Theorem 1.1]{Kov00}. 
	By \cite[Corollary 9.11]{kov-rtl} it is sufficient to show that $X$ has pseudo-rational singularities, namely that for every normal scheme $\widetilde{X}$ and every projective birational morphism $\varphi \colon \widetilde{X} \rightarrow X$, the natural map $\varphi_* \omega_{\widetilde{X}} \to \omega_X$ is an isomorphism. 
	
	By the existence of a Macaulayfication (see \cite[Theorem 1.1]{Kaw00}) we can construct a projective birational morphism $\widetilde{Y} \to Y$ such that $\widetilde{Y}$ is Cohen-Macaulay and the following diagram commutes:
	\begin{equation*}
		\xymatrix{
			\widetilde{Y} \ar[r]^{\widetilde{\pi}} \ar[d]^{\psi} & \widetilde{X} \ar[d]^{\varphi} \\
			Y \ar[r]^{\pi} & X 
		}
	\end{equation*}
	
	Thus we have the following commutative diagram in $D(X)$:
	\begin{equation*}
		\xymatrix{
			\mathcal{O}_X \ar[r] \ar[d] & \mathbf{R} \pi_* \mathcal{O}_Y \ar[d] \\
			\mathbf{R}\varphi_*\mathcal{O}_{\widetilde{X}} \ar[r] & \mathbf{R} \varphi_* \mathbf{R}\widetilde{\pi}_* \mathcal{O}_{\widetilde{Y}}
		}
	\end{equation*}
	Since $Y$ has rational singularities and $\widetilde{Y}$ is Cohen-Macaulay, the composition
	\[ \mathbf{R} \pi_* \o_Y \to \mathbf{R} \varphi_* \mathbf{R}\widetilde{\pi}_* \mathcal{O}_{\widetilde{Y}} \simeq \mathbf{R} \pi_* \mathbf{R} \psi_* \o_{\widetilde{Y}} \simeq  \mathbf{R} \pi_* \o_Y \]
	is a an isomorphism. 
	Therefore by assumption (\ref{h-2}) we have a splitting in $D(X)$:
	\[ \mathcal{O}_X \rightarrow \mathbf{R} \varphi_* \mathcal{O}_{\widetilde{X}} \rightarrow \mathcal{O}_X. \]
	
	Applying $\mathbf{R} \mathcal{H}om _X( - , \omega_X^\bullet)$ and Grothendieck duality to the above sequence, we have the following splitting:
	\begin{equation}\label{split-omega}
		\omega_X^\bullet \rightarrow \mathbf{R} \varphi_* \omega^{\bullet}_{\widetilde{X}} \rightarrow \omega_X^{\bullet}.
	\end{equation} 
	Since $X$ is Cohen-Macaulay, we have $ \omega_X^\bullet \simeq \omega_X[-d]. $
	Considering the $(-d)$-th cohomology group in sequence  (\ref{split-omega}),
	we deduce that the composition
	\[ \omega_X \rightarrow \varphi_* \omega_{\widetilde{X}} \rightarrow \omega_X \]
	is an isomorphism. 
	Note that $\varphi_*\omega_{\widetilde{X}}$ is a torsion-free sheaf of rank one since $\omega_{\widetilde{X}}$ is so. 
	Therefore $\varphi_* \omega_{\widetilde{X}} \rightarrow \omega_X$ is an isomorphism.
	This means that $X$ has pseudo-rational singularities. 
\end{proof}

\section{Consequences of Grauert-Riemenschneider vanishing}\label{g-r.cons.sec}

We claim that those dlt pairs for which G-R vanishing holds satisfy all the other known local rationality properties that are known to hold in characteristic 0. This claim is not really new;  the arguments below are at least implicit in many
characteristic 0 papers, especially  \cite{k-dep},   
and in the positive characteristic works of
\cite{HW19, ABL20}.
However, the clearest  statements of such principles are in \cite{Kov00, kov-rtl};
see also \cite{Kov12}  and 
\cite[Secs.\ 2.5 and 7.3]{kk-singbook}  for more introductory treatments.

\begin{thm}\label{g-r.impl.other.thm}
	Let $(X, \Delta)$ be an excellent  dlt pair that admits a thrifty log resolution. Suppose that G-R vanishing holds over it. Then
	\begin{enumerate}
		\item $X$ is Cohen-Macaulay;
		\item Let $D$ be a  Weil $\z$-divisor on $X$ such that
		$D+ \Delta_D$ is $\r$-Cartier for some $0\leq \Delta_D\leq \Delta$. Then 
		$\o_X(D)$ is CM.
		\item $(X, B)$ is a rational pair (as in \cite[2.80]{kk-singbook})  for every $B\subset \rdown{\Delta}$, in particular $X$ has rational singularities;
		\item Log canonical centers of $(X, \Delta)$ are normal and  have rational singularities;
	\end{enumerate}
\end{thm}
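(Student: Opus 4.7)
The plan is to deduce all four parts from the assumed G-R vanishing using the Fujita transform and CM-criterion machinery of Section \ref{s-fuj-transf}.

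For part (1), fix a thrifty log resolution $\pi \colon Y \to (X, \Delta)$ and apply Theorem \ref{cm.crit.thm} with $D = 0$ and $\Delta_D = 0$. Since $Y$ is regular, hypotheses (1)--(2) of that criterion are automatic; the effectivity hypothesis (5) is Claim \ref{c.klt.Kfujtransf} (which uses thriftiness of $\pi$ in the dlt setting); and the cohomological hypotheses (3)--(4), namely $R^i \pi_* \omega_Y = 0$ and $R^i \pi_* \mathcal{O}_Y(\pi^{\rm KF}(0)) = 0$, are supplied by Claim \ref{c.klt.Kfujtransf.3} directly from the assumed G-R vanishing. Part (2) is the same argument run with the general data $(D, \Delta_D)$ in place of $(0, 0)$.

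For part (3), fix $B \subseteq \rdown{\Delta}$ and a thrifty log resolution $\pi \colon (Y, B' + F) \to (X, B)$ with $B' = \pi^{-1}_* B$ and $F$ exceptional. Unwinding the definition of a rational pair from \cite[2.80]{kk-singbook}, what is required is the pair of vanishings $R^i \pi_* \omega_Y(B') = 0$ and $R^i \pi_* \mathcal{O}_Y(-B') = 0$ for $i > 0$, together with the CM-ness of both $X$ and $\mathcal{O}_X(-B)$. The first vanishing is G-R vanishing applied with $D' = K_Y + B'$ and $\Delta' = B'$ (so $\pi_*\Delta' = B \leq \Delta$ and $\Delta'$ is non-exceptional); the second follows by Grothendieck duality since $Y$ is regular. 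The CM-ness statements are (1) and (2) applied to $D = -B$ with $\Delta_D = B \leq \rdown{\Delta}$.

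For part (4), I argue by induction on the dimension of the lc center $W$. In the divisorial case $W = S \subset \rdown{\Delta}$, Lemma \ref{flex.KV.ind.lem.1} with $D_X = 0$ gives an exact sequence
\[ 0 \to \omega_X \to \omega_X(S) \to \nu_* \omega_{\bar S} \to 0, \]
since $\omega_X$ is $S_3$ by (1); both outer terms are CM by (1)--(2), forcing $\nu_* \omega_{\bar S}$ to be CM, hence $\bar S$ is CM. A parallel analysis of $\mathcal{O}_X(-S) \to \mathcal{O}_X \to \mathcal{O}_S$ (working locally where a suitable multiple of $S$ is Cartier and invoking (2)) shows that $S$ itself is $S_2$; combined with $R_1$ from \cite[2.31]{kk-singbook}, Serre's criterion gives $S$ normal, so $\nu$ is the identity. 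For rational singularities on $S$, take a log resolution with $S' := \pi^{-1}_* S$ smooth, so that $\sigma := \pi|_{S'} \colon S' \to S$ is a resolution. Applying $\mathbf{R}\pi_*$ to the restriction sequence $0 \to \omega_Y \to \omega_Y(S') \to \omega_{S'} \to 0$, whose outer sheaves satisfy G-R vanishing, yields $R^i \sigma_* \omega_{S'} = 0$ for $i > 0$, which together with $S$ being CM is rational singularities. Higher-codimension lc centers are, via adjunction, lc centers of the dlt pair $(S, \diff_{S}(\Delta - S))$ on the normal surface $S$, where G-R vanishing is known by \cite{lip-rs}, so the induction closes. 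The main obstacle is the CM descent $X \leadsto S$: lifting $S_2$-ness to $S$ itself rather than $\bar S$ is the point where thriftiness of the resolution and the dlt (rather than merely lc) hypothesis are genuinely used, through a careful tracking of the $\mathbb{Q}$-Cartier index of $S$.
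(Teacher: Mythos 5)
Parts (1)--(2) of your proposal match the paper's argument (Fujita transforms, Claims \ref{c.klt.Kfujtransf} and \ref{c.klt.Kfujtransf.3}, and Theorem \ref{cm.crit.thm}), but parts (3) and (4) each contain a genuine gap, and it is essentially the same mistake twice. In (3) you assert that $R^i\pi_*\o_Y(-B')=0$ ``follows by Grothendieck duality'' from $R^i\pi_*\omega_Y(B')=0$. Duality for the proper birational map $\pi$ converts higher direct images of $\omega$-type sheaves into \emph{local cohomology along fibers} of $\o$-type sheaves (this is exactly what Theorem \ref{cm.crit.thm} exploits via \cite[10.44]{kk-singbook}); it does not convert them into higher direct images. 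Taking $B=0$, your claim would say that Grauert--Riemenschneider vanishing implies rational singularities, which is false -- indeed the introduction's counterexamples (cones over surfaces violating Kodaira vanishing) satisfy $R^i\pi_*\omega_Y=0$ but not $R^i\pi_*\o_Y=0$. The paper gets the structure-sheaf-type vanishing by a \emph{second} application of the G-R hypothesis: writing $K_Y+B_Y+\Delta'_Y+F=g^*(K_X+\Delta)+E$, one has $\rup{E}-B_Y\sim_{g,\r}K_Y+\Delta'_Y+F+(\rup{E}-E)$, and the right-hand correction is effective with exceptional round-down zero, so G-R vanishing applies directly to $\o_Y(\rup{E}-B_Y)$. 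The identical error recurs in (4): you deduce rationality of $S$ from $R^i\sigma_*\omega_{S'}=0$ plus $S$ being CM, but for a normal excellent surface both of these hold automatically (by \cite{lip-rs}) while elliptic singularities are not rational. What is needed is $R^i\sigma_*\o_{S'}=0$, which the paper extracts from vanishing on $Y$ via the restriction argument of Claim \ref{g-r.impl.other.thm}.5 applied to $\mathcal{L}=\o_Y$ and the Fujita transforms of the $-B_i$.

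A second, structural problem is your induction for deeper lc centers: you pass to $(S,\diff_S(\Delta-S))$ and invoke G-R vanishing on $S$ ``since $S$ is a surface.'' Theorem \ref{g-r.impl.other.thm} is stated for excellent dlt pairs of arbitrary dimension, so $S$ need not be a surface; and even when it is, the paper's Remark following the theorem explicitly warns that G-R vanishing over $(X,\Delta)$ is \emph{not} known to descend to $\bigl(B_i,\diff_{B_i}(\Delta-B_i)\bigr)$, because not every divisor $D_{B_i}\sim_{\r}K_{B_i'}+\Delta'$ on $B_i'$ is the restriction of one from $X'$. The paper circumvents this by only ever restricting divisors constructed functorially on $X$ (the Fujita transforms of $-B_J$), for which Claim \ref{g-r.impl.other.thm}.5 supplies the vanishing on $B_i'$ needed to rerun the CM criterion one dimension down. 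Your normality argument for divisorial centers (CM-ness of $\o_X(-S)$ from (2), the sequence $0\to\o_X(-S)\to\o_X\to\o_S\to 0$, and $R_1$ from \cite[2.31]{kk-singbook}) does agree with the paper's.
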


All the above properties hold for excellent dlt pairs defined over $\mathbb{Q}$ by \cite{murayama2021relative}, and for dlt 3-folds as in  Theorem~\ref{dlt.chr7.gr.cor}. 
Note that, for dlt 3-folds pairs over perfect fields of characteristic at least 7, (1)  was proved  in \cite{HW19, ABL20}.

\begin{proof} 	If (2) holds then applying it to $D=\Delta_D=0$ we conclude that $X$ is CM.
	
	As for the proof of (2), let  $\pi \colon Y \to (X, \Delta)$ be a thrifty log resolution. Then we can apply Theorem \ref{cm.crit.thm} by Claims \ref{c.klt.Kfujtransf} and \ref{c.klt.Kfujtransf.3} to conclude that $\mathcal{O}_X(D)$ is CM.
	
	Let  $g \colon Y \to (X, \Delta)$ be a thrifty log resolution and let
	$B_Y$ denote the birational transform of $B$ and $\Delta'_Y$ the strict transform of $\Delta'=\Delta-B$.
	We write $$K_Y+B_Y+\Delta'_Y+F=g^*(K_X+\Delta)+E,$$ where $E$ and $F$ are both effective exceptional divisor without common exceptional components and $\lfloor{F \rfloor}=0$.
	Since $\lceil{E \rceil}-B_Y \sim_{g, \r} K_Y+\Delta'_Y+F+(\lceil{E\rceil} - E)$, we have that G-R vanishing applies both to $\mathcal{O}_Y(\lceil{E \rceil}-B_Y)$ and to $\omega_Y(B_Y)$, so the same proof of \cite[2.87]{kk-singbook} works in this setting, giving (3).

	We prove (4) by induction. Let $B_1, \dots, B_r$ be the irreducible components of $\lfloor{\Delta \rfloor}$. 
	We now prove that every irreducible component of $B_{i_1} \cap \cdots\cap B_{i_r}$ is normal and  has rational singularities. 
	If $B_i$ is an irreducible component of $\rdown{\Delta}$,
	applying (2)  to $\Delta_{B_i}:=B_i$ shows that $\o_X(-B_i)$ is CM.
	Now the sequence
	$$
	0\to \o_X(-B_i)\to \o_X\to \o_{B_i} \to 0
	$$
	shows that $\o_{B_i}$ is CM  (cf.\   \cite[2.60]{kk-singbook}). 
	By  \cite[2.31]{kk-singbook} $B_i$ is regular in codimension 1, hence normal. 
	Thus the pair $(B_i, \diff_{B_i}(\Delta-B_i))$ is dlt by easy adjunction \cite[4.8]{kk-singbook}.
	
	The following claim is crucial for the induction argument:
	\medskip
	
	{\it Claim \ref{g-r.impl.other.thm}.5.}
	Let $g \colon Y\to X$ be a proper morphism and $\{D_i:i\in I\}$ Cartier divisors on $Y$. Let $\mathcal{L}$ be an invertible sheaf on $Y$ and for $J\subset I$ set $D_J:=\cup_{i\in J}D_i$.  
	Assume that
	$R^mg_* \mathcal{L}(-D_J)=0$ for every $J\subset I$ and $m>0$.
	Then, for every $i\in I$,
	$$
	R^m(g|_{D_i})_*\bigl(\mathcal{L}(-{D_J})|_{D_i}\bigr)=0\qtq{for every}  J\subset I\setminus\{i\}\qtq{and} m>0.
	$$
	\begin{proof}
		Consider the short exact sequence $ 0 \to \mathcal{O}_Y(-D_i) \to \mathcal{O}_Y \to \mathcal{O}|_{D_i} \to 0$ and tensor it by $\mathcal{L}(-D_J)$ where $J \subset I \setminus\{i\}$. For $m>0$ we have the following exact sequence:
		$$ R^m g_*\mathcal{L}(-D_J) \to R^m (g|_{D_{i}})_*(\mathcal{L}(-D_J)|_{D_i}) \to R^{m+1} g_*\mathcal{L}(-D_J-D_i).$$
		We conclude by hypothesis that $R^m (g|_{D_{i}})_*(\mathcal{L}(-D_J)|_{D_i})=0$.
	\end{proof}
	
	To prove that an irreducible component $B_{ij}:=B_i \cap B_j$ of $\Gamma_i=\diff_{B_i}(\Delta-B_i)$ is CM, it is sufficient to show that $\o_{B_i}(-B_{ij})$ is CM. 
	By Claim \ref{c.klt.Kfujtransf.3} on $X$, naturality of the Fujita transforms and Claim \ref{g-r.impl.other.thm}.5 applied to $\mathcal{L}=\mathcal{O}_Y$ and $\left\{ D_i=g^{\rm KF}(-B_i)  \right\}$ (resp. $\mathcal{L}=\omega_Y$ and $\left\{ D_i=g^{\rm F}(-B_i)  \right\}$) the following higher direct images
	$$R^m (g|_{B_i'})_* \o_{B_i'}((g|_{B_i'})^{\rm KF}(-B'_{ij})) \text{ and } R^m (g|_{B_i'})_* \o_{B'_i} (K_{B'}-g^{F}(-B'_{ij})) \text{ for } m>0 $$
	vanish, so we can apply Theorem \ref{cm.crit.thm} to conclude $\o_{B_i}(-B_{ij})$ is CM.
	Once we know $B_{ij}$ is CM, we deduce that $B_{ij}$ is normal so $\big(B_{ij}, \diff_{B_{ij}}(\Gamma_i-B_{ij})\big)$ is dlt.
	We can thus proceed with a straightforward induction process to show that all strata of $\lfloor{\Delta \rfloor}$ are Cohen-Macaulay. Finally, the proof of \cite[4.16]{kk-singbook} now carries over to show that the lc centers of $(X, \Delta)$ are exactly the strata of $\lfloor{\Delta \rfloor}$. 
	We are only left to prove that the strata have rational singularities. We start by showing it for codimension one strata $B_i$. As above we can show $R^m (g|_{B_i'})_* \o_{B_i'} \text{ and } R^m (g|_{B_i'})_* \o_{B'_i} (K_{B'_i})$ vanish for $m>0$ and so we conclude $B_i$ are rational. We continue by induction to conclude.
\end{proof}

\begin{rem}
	The natural inductive proof of (4) would be to show that G-R vanishing holds over $\bigl(B_i, \diff_{B_i}(\Delta-B_i)\bigr)$. This is quite likely true, but it is not obvious how to prove it. 
	To see what the difficulty is, fix a log resolution $g \colon X'\to X$ and let $B_i'\subset X'$ denote the birational transform of $B_i$.  Given $D' \sim_{g, \r} K_{X'}+B_i'+\Delta'$ where $g_*\Delta' \leq \Delta$ and $\rdown{\ex(\Delta')}=0$, we get that
	$R^i(g|_{B_i'})_*\o_{B'}(D_i'|_{B'})=0$ for $i>0$.
	Not every log resolution of $\bigl(B_i, \diff_{B_i}(\Delta-B_i)\bigr)$ is of this form, but, as we discussed in Section~\ref{bir.inv.subsect}, this does not matter. The problem is that usually not every divisor $D_{B_i} \sim_{g|_{B', \r}} K_{B_i'}+\Delta'$ on $B_i'$ is the restriction of a divisor from $X'$ and we do not know how to overcome this.  
	However, in many applications we need G-R vanishing not for all divisors, but only for some that are constructed from $(X, \Delta)$ in a `natural' way. If we are lucky then the construction of these divisors commutes with restriction, and the proof goes through. 
	This is exactly the case  with the proof of (4). 
\end{rem}

\begin{rem}
	A general result on rationality of dlt singularities in every dimension similar to Theorem \ref{g-r.impl.other.thm}(2) has been proven in \cite[Corollary 10.18]{kov-rtl}, where it is proved that $X$ has rational singularities if it is Cohen-Macaulay and $X$ is potentially dlt.  
\end{rem}

\section{Grauert-Riemenschneider vanishing theorem}\label{g-r.sec}

The aim of this section is to prove G-R vanishing for dlt threefolds.

\begin{proof}[Proof of Theorem~\ref{dlt.chr7.gr.cor}]
	\label{dlt.chr7.gr.cor.pf}
	Since the statement of the theorem is local, we can assume $X$ to be affine.
	Let $g \colon Y \to (X,\Delta)$ be a projective log resolution such that the exceptional divisor $E:=E_1+\dots+E_n$ supports a $g$-ample $\q$-divisor whose existence is guaranteed by \cite[Corollary 3]{k-wit}.
	By \cite[Theorem 9]{k-notqfacmmp}, (see also \cite{mixed-char-3-mmp}), we can run a relative MMP starting with 
	$(X^0, \Theta^0):=(Y, \Theta:=g_*^{-1} \Delta_Y+\sum_{i=1}^n E_i)$ over $X$ as in \cite[Theorem 2]{k-notqfacmmp}.
	Let 
	$$
	\phi_i \colon (X^{i-1}, \Theta^{i-1}) \map (X^i, \Theta^i)
	$$ be the $i$-th step of the MMP and
	$g_i\colon X^i\to Y$ the corresponding map to $Y$.
	
	As we proved in Section~\ref{bir.inv.subsect}, G-R vanishing holds over the snc pair   $(X^0, \Theta^0)$. We show the following:\\
	
	{\it Claim \ref{dlt.chr7.gr.cor.pf}.1.}  If  G-R vanishing holds {\em over} $(X^{i-1}, \Theta^{i-1})$ then it  also holds {\em over} $(X^{i}, \Theta^{i})$. 
	\medskip
	
	At the end of the MMP now we conclude that  G-R vanishing holds over $(X^{m}, \Theta^{m})$. 
	Therefore it remains to show Claim~\ref{dlt.chr7.gr.cor.pf}.1 for each MMP step.
	If $\phi_i \colon (X^{i-1}, \Theta^{i-1}) \map (X^i, \Theta^i)$ is a
	divisorial contraction that contracts a  divisor $S_{i-1}\subset X^{i-1}$, then we use Proposition~\ref{g-r.desc.contr.lem}.
	For this we need to check that assumption  (\ref{g-r.desc.contr.lem}.3) is satisfied. If  $S_{i-1}$ is contracted to a curve, then K-V vanishing holds by \cite[Theorem 3.3]{Tan18}. If $S_{i-1}$  is mapped to a closed point, then  we use \cite[Theorem 1.1]{ABL20}.  
	This is where we use the assumption that the residue fields of closed points are perfect of characteristic $p> 5$.
	
	If $\phi_i \colon (X^{i-1}, \Theta^{i-1}) \map (X^i, \Theta^i)$ is a flip, then we apply Proposition~\ref{g-r.desc.flip.lem}, which works for any excellent 3 dimensional scheme. 
\end{proof}

For divisorial contractions we have the following general result.

\begin{prop} \label{g-r.desc.contr.lem}  Let $(X, S+\Delta)$ be an excellent dlt pair where $S$ is a prime divisor, $g \colon X\to Z$ a proper birational morphism contracting $S$.
	Assume that
	\begin{enumerate}
		\item  G-R vanishing holds over   $(X, S+\Delta)$,
		\item $-S$ is $g$-ample,   and
		\item  K-V vanishing holds  for 
		$g|_S:(S, \diff_S\Delta)\to g(S)$ (as in Definition~\ref{flex.KV.defn}). 
	\end{enumerate}
	Then G-R vanishing holds over $\bigl(Z, g_*\Delta\bigr)$.
\end{prop}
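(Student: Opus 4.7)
The plan is to reduce, via a Leray spectral sequence for a factorization through $X$, to a Kawamata--Viehweg vanishing on $X$ furnished by Proposition \ref{flex.KV.ind.lem}.

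By the birational invariance from Section \ref{bir.inv.subsect}, it suffices to verify the G-R condition for one log resolution $\pi\colon Y \to Z$ of $(Z, g_*\Delta)$. I would choose $\pi$ to factor as $\pi = g\circ h$, where $h\colon Y \to X$ is a \emph{thrifty} log resolution of $(X, S+\Delta)$ --- achievable since thrifty log resolutions exist for excellent 3-folds. Let $S_Y \subset Y$ be the strict transform of $S$; it is $\pi$-exceptional but not $h$-exceptional. Given $(D',\Delta')$ satisfying the G-R hypotheses over $(Z, g_*\Delta)$, the Leray spectral sequence reduces $R^i\pi_*\o_Y(D') = 0$ for $i>0$ to: (a) $R^jh_*\o_Y(D') = 0$ for $j>0$, and (b) $R^ig_*\bigl(h_*\o_Y(D')\bigr) = 0$ for $i>0$.

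For (a), decompose $\Delta' = a S_Y + \Delta''$ with $\Delta''$ having no $S_Y$-component. The condition $\rdown{\ex(\Delta')}=0$ forces $a<1$. Since $\pi_*\Delta' \leq g_*\Delta$ and neither $\Delta$ nor $h_*\Delta''$ contains $S$, we get $h_*\Delta'' \leq \Delta$, and hence $h_*\Delta' = aS + h_*\Delta'' \leq S+\Delta$. The $h$-exceptional part of $\Delta'$ is contained in its $\pi$-exceptional part, hence has vanishing round-down, and $\pi$-relative $\r$-equivalence implies $h$-relative $\r$-equivalence. Thus hypothesis (1) yields (a).

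For (b), set $D_X := h_*D'$. The computations in (a) show that $D'$ is the K-twisted Fujita transform of $D_X$ on $Y$, for the auxiliary boundary $\Delta_{D_X} := (1-a)S + (\Delta - h_*\Delta'') \leq S+\Delta$. Since $h$ is thrifty, Claim \ref{c.klt.Kfujtransf} gives $h_*\o_Y(D') = \o_X(D_X)$. Pushing $D' \sim_{\pi,\r} K_Y + \Delta'$ through $h$ yields $D_X \sim_{g,\r} K_X + S + h_*\Delta'' + L$, where $L := -(1-a)S$ is $g$-ample by hypothesis (2). Combined with K-V vanishing on $\bar S$ from hypothesis (3), Proposition \ref{flex.KV.ind.lem} would then give (b), provided one can verify its $S_3$-hypothesis \ref{flex.KV.ind.lem}.2. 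This is the main obstacle: Theorem \ref{g-r.impl.other.thm}, applied to $(X, S+\Delta)$ via hypothesis (1), gives Cohen-Macaulayness of $X$ and of $\o_X(D_X)$, but not of $\o_X(D_X - mS)$ for $m\geq 1$, since the corresponding boundary has $S$-coefficient $m+1-a > 1$. To resolve this, one either exploits that $S$ is $\q$-Cartier and twists by Cartier multiples of $S$ (reducing to $m$ in the relevant residue class and then interpolating), or runs the restriction-to-$S$ filtration argument of Proposition \ref{flex.KV.ind.lem} directly on $\o_Y(D')$ using the smoothness of $S_Y \subset Y$, thereby sidestepping the $S_3$ issue on $X$ altogether.
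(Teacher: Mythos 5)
Your proof follows the paper's argument essentially step for step: factor a log resolution of $(Z,g_*\Delta)$ through $X$, use G-R vanishing over $(X,S+\Delta)$ to kill the higher direct images along $h$ and to identify $h_*\o_Y(D')$ with $\o_X(D_X)$, rewrite $D_X\sim_{g,\r}K_X+S+(\Delta'_X-cS)+(1-c)(-S)$ so that $L=(1-c)(-S)$ is $g$-ample, and conclude with Proposition~\ref{flex.KV.ind.lem} and the Leray spectral sequence. This is exactly the paper's proof.

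The one place you stall, the $S_3$-hypothesis (\ref{flex.KV.ind.lem}.2), is not actually an obstacle, and your first proposed fix is (a more complicated version of) what the paper does. Theorem~\ref{g-r.impl.other.thm}(2) requires only that $(D_X-mS)+\Delta_D$ be $\r$-Cartier for \emph{some} $0\le\Delta_D\le S+\Delta$; the auxiliary boundary does not have to be $D$-minus-$(K_X+\Delta)$, so there is no reason for its $S$-coefficient to grow with $m$. Since $-S$ is $g$-ample by hypothesis (2), $S$ is $\q$-Cartier, so the \emph{same} choice $\Delta_D:=S+\Delta-\Delta'_X$ works for every $m$: $(D_X-mS)+(S+\Delta-\Delta'_X)$ differs from the $\r$-Cartier divisor $D_X+(S+\Delta-\Delta'_X)\sim_{g,\r}K_X+S+\Delta$ by the $\q$-Cartier divisor $-mS$. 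Hence $\o_X(D_X-mS)$ is CM, in particular $S_3$ along $S$, with no interpolation or residue-class reduction needed. Your alternative suggestion of rerunning the filtration of Proposition~\ref{flex.KV.ind.lem} on $Y$ is unnecessary and would anyway require reproving the pushforward comparisons you already have on $X$.
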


\begin{proof} \label{g-r.desc.contr.lem.pf}
	We use Proposition~\ref{flex.KV.ind.lem}. 
	Choose any log resolution $\pi\colon Y\to X$.
	Assume that we have $D_Y\sim_{\mathbb{R}} K_Y+\Delta'_Y$. 
	Set $D_X:=\pi_*(D_Y)$. 
	Then $\pi_*\o_Y(D_Y)=\o_X(D_X)$ by Claim \ref{c.pushforward.fujita}. 
	Since $(X, S+\Delta)$ satisfies G-R vanishing,  Theorem~\ref{g-r.impl.other.thm} shows that  $S$ is normal and $\mathcal{O}_X(D_X-mS)$ is $S_3$ along $S$ since $(D_X-mS+(S+\Delta-\Delta'_X))$ is $\r$-Cartier.
	
	Set $c:=\coeff_S\Delta'_Y$. Since $S$ is $g$-exceptional, we have $0\leq c<1$, hence
	$$
	D_X\sim_{\mathbb{R}}  K_X+\Delta'_X=K_X+S+(\Delta'_X-cS)+(1-c)(-S)
	$$
	shows that  assumption (\ref{flex.KV.ind.lem}.1) and (\ref{flex.KV.ind.lem}.4) are also satisfied.
	Thus  $R^ig_*\o_X(D_X)=0$ for $i>0$ near $g(S)$ and the Leray spectral sequence gives that $R^i(g\circ\pi)_*\o_Y(D_Y)=0$ for $i>0$ near $g(S)$. 
\end{proof}

Next we show that G-R vanishing is preserved by many 3-dimensional flips. 

\begin{prop} \label{g-r.desc.flip.lem}  Let $(X, S+\Delta)$ be a 3-dimensional excellent dlt pair and
	$$
	(C\subset X,S+\Delta)\stackrel{g}{\longrightarrow} Z\stackrel{\hphantom{aa}g^+}{\longleftarrow} (C^+ \subset X^+ , S^++\Delta^+)
	$$ 
	a flip.
	Assume that
	\begin{enumerate}
		\item  G-R vanishing holds over   $(X, S+\Delta)$, and
		\item $-S$ is $g$-ample.
		% \item  G-R vanishing holds   for 
		% $g|_S:(S, \diff_S\Delta)\to g(S)$. 
	\end{enumerate}
	Then G-R vanishing holds over $( X^+ , S^++\Delta^+)$.
\end{prop}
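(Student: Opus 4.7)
The plan is to reduce G-R vanishing over $(X^+, S^++\Delta^+)$ to the assumed G-R vanishing over $(X, S+\Delta)$ by working on a common log resolution of the flip. Since the flip $\phi\colon X \dashrightarrow X^+$ is an isomorphism in codimension one, I choose $\pi\colon Y \to X$ and $\pi^+\colon Y \to X^+$ to be simultaneous log resolutions sharing the same exceptional divisors (namely those contracted into the codimension-two flipping loci $C$ and $C^+$), with $g\circ\pi = g^+\circ\pi^+ =: \sigma$. By the birational invariance of G-R vanishing (Section~\ref{bir.inv.subsect}), it suffices to verify the conclusion on this particular $Y$; since the question is local on $X^+$ and trivial away from $C^+$, I localize near $C^+$.

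Given a Weil $\z$-divisor $D_Y$ satisfying the G-R hypotheses relative to $\pi^+$, I first check that, after possibly replacing the boundary $\Delta'_Y$ by $\Delta'_Y + F$ for some $\pi$-exceptional $\r$-divisor $F$ with $\rdown{F}=0$, the same $D_Y$ also satisfies the G-R hypotheses relative to $\pi$. The conditions $\pi^+_*\Delta'_Y \leq S^++\Delta^+$ and $\rdown{\ex(\pi^+, \Delta'_Y)}=0$ transfer directly because $\pi$ and $\pi^+$ have identical exceptional divisors and the strict transform by $\phi^{-1}$ sends the components of $S^++\Delta^+$ to those of $S+\Delta$. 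The relative equivalence $D_Y - K_Y - \Delta'_Y = (\pi^+)^*M^+$ differs from the pullback of an $\r$-Cartier divisor on $X$ only by a $\pi$-exceptional discrepancy controlled by the negativity lemma applied to the flip (using that $-S$ is $g$-ample and $S^+$ is $g^+$-ample); this correction is absorbed into $\Delta'_Y$ without violating the floor-zero condition. Applying G-R vanishing on $(X, S+\Delta)$ then yields $R^i\pi_*\o_Y(D_Y)=0$ for $i>0$, and Claim~\ref{c.pushforward.fujita} identifies $\pi_*\o_Y(D_Y) = \o_X(\bar D)$ with $\bar D = \pi_*D_Y$.

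Next, I apply Proposition~\ref{flex.KV.ind.lem} to the small morphism $g\colon X \to Z$: the divisor $-S$ is $g$-ample, $S$ is normal by Theorem~\ref{g-r.impl.other.thm}(4) applied to $(X, S+\Delta)$, and the strong G-R vanishing required for the two-dimensional birational morphism $g|_{\bar S}\colon \bar S \to g(S)$ holds by the classical surface theory of \cite[10.4]{kk-singbook}. This yields $R^ig_*\o_X(\bar D)=0$ for $i>0$ in a neighborhood of $g(C)$, and the Leray spectral sequence for $\sigma = g\circ\pi$ gives $R^i\sigma_*\o_Y(D_Y)=0$ for $i>0$.

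Finally, since $\sigma = g^+\circ\pi^+$, the vanishing of $R^i\sigma_*\o_Y(D_Y)$ must be converted into vanishing of $R^i\pi^+_*\o_Y(D_Y)$ through the Leray spectral sequence $R^pg^+_*(R^q\pi^+_*\o_Y(D_Y)) \Rightarrow R^{p+q}\sigma_*\o_Y(D_Y)$. Both $g^+$ and $\pi^+$ have fibers of dimension at most one near $C^+$, so nonzero terms only appear for $p,q\in\{0,1\}$; together with the independently established vanishing $R^ig^+_*\o_{X^+}(\bar D^+)=0$ for $i>0$ (obtained on $X^+$ by a dual filtration argument using that $S^+$ is $g^+$-ample in place of $-S$ being $g$-nef), the spectral sequence degenerates and forces $R^q\pi^+_*\o_Y(D_Y)=0$ for $q>0$. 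The main obstacle is precisely this last vanishing on $X^+$: the hypothesis of Proposition~\ref{flex.KV.ind.lem} fails on $X^+$ since $-S^+$ is $g^+$-antiample rather than $g^+$-nef, so one must redo the induction-on-$m$ argument in the opposite direction, combined with the formal function theorem applied to the one-dimensional flipping fibers.
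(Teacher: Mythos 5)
Your overall architecture agrees with the paper's up to the last step: take a common log resolution $\pi\colon Y\to X$, $\pi^+\colon Y\to X^+$, use the assumed G-R vanishing on $X$ together with Proposition~\ref{flex.KV.ind.lem} (with $-S$ $g$-ample and strong G-R vanishing for the birational surface morphism $g|_{\bar S}$) to get G-R vanishing over $Z$, hence $R^i\sigma_*\o_Y(D_Y)=0$ for $\sigma=g\circ\pi=g^+\circ\pi^+$. The gap is in how you climb back up to $X^+$. You assert that one must \emph{independently} establish $R^ig^+_*\o_{X^+}(\bar D^+)=0$, concede that Proposition~\ref{flex.KV.ind.lem} does not apply because $-S^+$ is $g^+$-anti-ample, and propose to ``redo the induction-on-$m$ argument in the opposite direction.'' That reversal does not work: the induction in Proposition~\ref{flex.KV.ind.lem} rests on the filtration $J_m=\im[\o(D-mS)\to\o(D)]$, whose successive quotients live on $S$ and whose inverse system computes the formal completion along $S$; replacing $-mS^+$ by $+mS^+$ produces an increasing chain $\o(\bar D^+)\subset\o(\bar D^++S^+)\subset\cdots$ to which the formal function theorem says nothing. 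So the step you yourself flag as ``the main obstacle'' is left unproved, and the proposed fix would fail.

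The point you are missing is that no independent vanishing on $X^+$ is needed; this is exactly Lemma~\ref{higher.Ri.lem}. By strong G-R vanishing over $2$-dimensional excellent schemes, $R^q\pi^+_*\o_Y(D_Y)$ is supported on \emph{closed points} for $q\geq 1$ (localize at the generic point of $C^+$), and $R^pg^+_*$ of a coherent sheaf vanishes for $p\geq 2$ since the fibers of $g^+$ are at most one-dimensional. Hence the only surviving terms of the Leray spectral sequence for $\sigma=g^+\circ\pi^+$ are $g^+_*\bigl(R^q\pi^+_*\o_Y(D_Y)\bigr)$ and $R^1g^+_*\bigl(\pi^+_*\o_Y(D_Y)\bigr)$, and $R^i\sigma_*\o_Y(D_Y)=0$ forces both of these to vanish; since a coherent sheaf supported on finitely many closed points with zero proper pushforward is zero, $R^q\pi^+_*\o_Y(D_Y)=0$ follows. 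Note also that your weaker observation that ``$\pi^+$ has fibers of dimension at most one'' is not enough for your own argument: it only kills $R^q$ for $q\geq 2$ and leaves $R^1\pi^+_*\o_Y(D_Y)$ possibly supported on the whole curve $C^+$, for which $g^+_*(\cdot)=0$ does not imply vanishing. The skyscraper-support statement coming from surface G-R vanishing is the essential input, and once you have it the argument closes without any vanishing theorem on $X^+$ itself.
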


\begin{proof} \label{g-r.desc.flip.lem.pf}
	Choose a common log resolution
	$\pi\colon Y\to X$  and  $\pi^+ \colon Y\to X^+$ and assume we have  have $D_Y\sim_{\mathbb{R}} K_Y+\Delta'_Y$. 
	Following the proof of Proposition \ref{g-r.desc.contr.lem}, we show that G-R vanishing holds over $\bigl(Z, g_*(S+\Delta)\bigr)$.
	Now we show how to deduce G-R vanishing on $(X^+, S^+ + \Delta^+)$.
	
	By G-R vanishing for excellent surfaces, $R^i\pi^{+}_* \mathcal{O}_Y(D_Y)$ is supported on closed points for $i\geq 1$. Since the dimension of the exceptional locus of $g^+$ is one, we conclude also that $R^ig^+_*(\pi^+_* \mathcal{O}_Y(D))=0$ for $i \geq 2$. As G-R vanishing holds on on $(X^+, S^+ + \Delta^+)$  we can apply Lemma~\ref{higher.Ri.lem} to $F:=\o_Y(D_Y)$ and $W:=X^+$ to conclude that $R^i \pi^+_* \mathcal{O}_Y(D_Y)=0$ for $i>0$.
\end{proof}

\begin{lem} \label{higher.Ri.lem}
	Let $\tau \colon Y\to W$ and $\pi\colon W\to Z$ be proper morphisms and $F$ a coherent sheaf on $Y$. Assume that
	\begin{enumerate}
		\item  $\dim\supp R^i\tau_*F\leq 0$ for $i\geq 1$ and 
		\item $R^i\pi_*(\tau_*F)=0$  for  $i\geq 2$. 
	\end{enumerate}
	Then the  following are equivalent.
	\begin{enumerate}\setcounter{enumi}{2}
		\item $R^i(\pi\circ\tau)_*F=0$ for $i\geq 1$.
		\item $R^i\tau_*F=0$  for $i\geq 1$ and $R^1\pi_*(\tau_*F)=0$.
	\end{enumerate}
\end{lem}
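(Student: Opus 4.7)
The plan is to invoke the Leray spectral sequence
$$E_2^{p,q}=R^p\pi_*\bigl(R^q\tau_*F\bigr)\Rightarrow R^{p+q}(\pi\circ\tau)_*F,$$
and exploit hypotheses (1)--(2) to cut the $E_2$ page down to essentially one row and one column. First I would observe that by (1), for $q\geq 1$ the sheaf $R^q\tau_*F$ is a (coherent) sheaf supported on finitely many closed points of $W$; since $\pi$ is proper, $R^p\pi_*$ of such a skyscraper vanishes for $p\geq 1$, and $\pi_*$ takes it to a skyscraper supported on finitely many closed points of $Z$. In particular, the functor $\pi_*$ is \emph{faithful} on sheaves of the form $R^q\tau_*F$ for $q\geq 1$: one has $\pi_*(R^q\tau_*F)=0$ if and only if $R^q\tau_*F=0$. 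Combined with hypothesis (2), this reduces the $E_2$-page to the column $p=0$ (with $\pi_*(R^q\tau_*F)$ in degree $q$) together with the two terms $\pi_*(\tau_*F)$ and $R^1\pi_*(\tau_*F)$ in the bottom row.

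Next I would check that this reduced $E_2$-page already equals $E_\infty$. The only potentially nonzero differential leaving $E_r^{0,q}$ for $q\geq 1$ lands in $E_r^{r,q-r+1}$, which is in the already-vanishing region $p\geq 2$, $q\geq 0$ (using (2) when $q-r+1=0$); similarly every differential entering these terms has source $0$. Hence the spectral sequence degenerates and yields
$$R^n(\pi\circ\tau)_*F\;\cong\;\pi_*\bigl(R^n\tau_*F\bigr)\qquad\text{for }n\geq 2,$$
together with a short exact sequence
$$0\to R^1\pi_*(\tau_*F)\to R^1(\pi\circ\tau)_*F\to \pi_*(R^1\tau_*F)\to 0.$$

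With these two displays in hand, the equivalence of (3) and (4) is immediate. If (4) holds, then $R^q\tau_*F=0$ for $q\geq 1$ kills the entire $q\geq 1$ column, and the extension sequence collapses to $R^1(\pi\circ\tau)_*F\cong R^1\pi_*(\tau_*F)=0$, while $R^n(\pi\circ\tau)_*F\cong\pi_*(R^n\tau_*F)=0$ for $n\geq 2$, giving (3). Conversely, if (3) holds then the extension sequence forces both $R^1\pi_*(\tau_*F)=0$ and $\pi_*(R^1\tau_*F)=0$, and the isomorphism gives $\pi_*(R^n\tau_*F)=0$ for all $n\geq 2$; by the faithfulness observation above this upgrades to $R^n\tau_*F=0$ for all $n\geq 1$, which is (4).

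There is really no hard step here; the only point that needs a moment of care is the faithfulness of $\pi_*$ on sheaves supported at finitely many points. A coherent sheaf $\mathcal{G}$ on $W$ supported on a finite set $\{w_1,\dots,w_s\}$ splits as a direct sum of skyscrapers $\mathcal{G}=\bigoplus_i(\mathcal{G}_{w_i})_{w_i}$, and $\pi_*$ sends each summand to a skyscraper at $\pi(w_i)\in Z$ whose stalk is $\mathcal{G}_{w_i}$ viewed as an $\mathcal{O}_{Z,\pi(w_i)}$-module via $\mathcal{O}_{Z,\pi(w_i)}\to\mathcal{O}_{W,w_i}$; this is zero if and only if $\mathcal{G}_{w_i}=0$ for every $i$. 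Once this trivial verification is recorded, the rest of the argument is a transparent bookkeeping on the Leray spectral sequence.
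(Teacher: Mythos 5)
Your proof is correct and follows essentially the same route as the paper: both degenerate the Leray spectral sequence using hypotheses (1)--(2) to obtain the isomorphisms $R^n(\pi\circ\tau)_*F\cong\pi_*(R^n\tau_*F)$ for $n\geq 2$ and the short exact sequence in degree $1$. The only difference is that you explicitly record the conservativity of $\pi_*$ on sheaves supported at finitely many closed points, a step the paper leaves implicit but which is indeed needed for the implication (3) $\Rightarrow$ (4).
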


\begin{proof} 
	The only nonzero terms in the Leray spectral sequence are
	$\pi_*  R^i\tau_*F$ and $R^j\pi_*(\tau_*F)$ for $j=0,1$. This gives that
	$$
	R^i(\pi\circ\tau)_*F=\pi_*  R^i\tau_*F \qtq{for $i\geq 2$}
	$$
	and we have an exact sequence
	$$
	0\to  R^1\pi_*(\tau_*F)\to R^1(\pi\circ\tau)_*F\to \pi_*  R^1\tau_*F\to 0.\qedhere
	$$
\end{proof}

\section{Applications to Mori fiber spaces}\label{s-app}

In this section we describe some applications of the vanishing theorems proven in Section \ref{g-r.sec} to the birational geometry of threefolds in positive characteristic.

\subsection{Vanishing for log Fano contractions}

We prove the vanishing of higher direct images of the structure sheaf for log Fano contractions in characteristic $p \geq 7$.

\begin{thm}\label{t-vanishing-higher}
	Let $k$ be a perfect field of characteristic $p \geq 7$.
	Let $f \colon X \rightarrow Z$ be a proper contraction morphism between quasi-projective normal varieties over $k$.
	Suppose that there exists an effective $\q$-divisor $\Delta \geq 0$ such that
	\begin{enumerate}
		\item $(X, \Delta)$ is a klt threefold pair;
		\item  $-(K_X+\Delta)$ is $f$-big and $f$-nef;
		\item $\dim(Z) \geq 1$.
	\end{enumerate} 
	Then the natural map $\o_Z \rightarrow \mathbf{R}f_*\o_X $ is an isomorphism.
\end{thm}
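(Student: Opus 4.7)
The plan is to combine the G-R vanishing of Theorem~\ref{dlt.chr7.gr.cor}, a Kodaira-type perturbation of $\Delta$, and the surface-level Kawamata--Viehweg vanishing of \cite{ABL20}.

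First, by Theorems~\ref{dlt.chr7.gr.cor} and~\ref{g-r.impl.other.thm}, $X$ has rational singularities; since $f$ is a contraction, $f_*\o_X = \o_Z$, so it suffices to prove $R^if_*\o_X = 0$ for $i > 0$. The relative Kodaira lemma writes $-(K_X+\Delta) \sim_{f,\q} A + N$ with $A$ an $f$-ample $\q$-divisor and $N \geq 0$; for $0 < \epsilon \ll 1$ the pair $(X, \Delta + \epsilon N)$ is still klt, and
$$-(K_X+\Delta+\epsilon N) \sim_{f,\q} (1-\epsilon)\bigl(-(K_X+\Delta)\bigr) + \epsilon A$$
is $f$-ample. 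So I replace $\Delta$ by $\Delta + \epsilon N$ and assume $-(K_X+\Delta)$ is $f$-ample.

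Next, take a log resolution $g \colon Y \to X$ and set $h := f \circ g$. Writing $K_Y + \Delta_Y = g^*(K_X+\Delta) + E$ with $\Delta_Y$ a boundary of coefficients $<1$ and $E \geq 0$ a $g$-exceptional $\q$-divisor, set $D := \lceil E \rceil$. Then $g_*\o_Y(D) = \o_X$, and since $D \sim_{g,\r} K_Y + \Delta'_Y$ with $\Delta'_Y := \Delta_Y + (\lceil E \rceil - E)$ a boundary satisfying $g_*\Delta'_Y \leq \Delta$ and $\rdown{\ex(\Delta'_Y)} = 0$, G-R vanishing on $(X,\Delta)$ gives $R^ig_*\o_Y(D) = 0$ for $i > 0$. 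By the Leray spectral sequence, $R^if_*\o_X \cong R^ih_*\o_Y(D)$ for $i > 0$, and by construction $D - K_Y - \Delta'_Y \sim_\q -g^*(K_X+\Delta)$ is $h$-nef and $h$-big.

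Finally, to establish $R^ih_*\o_Y(D) = 0$ for $i > 0$, I factor $f$ via the relative $(K_X+\Delta)$-MMP over $Z$: since $-(K_X+\Delta)$ is $f$-ample, every $f$-contractible curve is $(K_X+\Delta)$-negative, and the MMP (available in dimension three, cf.\ \cite{mixed-char-3-mmp}) writes $f$ as a finite composition of extremal divisorial contractions, flips, and at most one Mori fiber space at the end (of relative dimension $\leq 2$ since $\dim Z \geq 1$). By Leray it suffices to prove the corresponding vanishing at each step. Divisorial contractions are handled along the lines of Proposition~\ref{g-r.desc.contr.lem}, whose key input is the K-V vanishing for surfaces of del Pezzo type in \cite[Theorem~1.1]{ABL20}---precisely where the hypothesis $p \geq 7$ is used. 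Flips are treated as in Proposition~\ref{g-r.desc.flip.lem}. For the terminal Mori fiber space step, the fibers are log Fano of dimension $\leq 2$, and one combines fiberwise K-V vanishing from \cite{Tan18, ABL20} with G-R vanishing on the source to conclude. The \emph{main obstacle} is this final Mori fiber space step: in positive characteristic, promoting fiberwise surface- or curve-level K-V vanishing to the global relative vanishing is subtle because special fibers may be singular and cohomology and base change need not apply directly; the descent framework of Section~\ref{g-r.sec} together with the input $p \geq 7$ of \cite{ABL20} is what makes it tractable.
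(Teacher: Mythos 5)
Your reductions at the start are fine (rational singularities of $X$, the Kodaira perturbation to make $-(K_X+\Delta)$ $f$-ample, and the passage to a log resolution via G-R vanishing), but they do not make the problem easier: after them you still face a relative Kawamata--Viehweg--type vanishing for a contraction with positive-dimensional fibers, which is exactly the content of the theorem and is false in positive characteristic without further input. The step where you try to supply that input contains two genuine problems. First, the relative $(K_X+\Delta)$-MMP over $Z$ does not ``write $f$ as a finite composition of divisorial contractions, flips, and a Mori fiber space'': the MMP produces a birational modification $X\dashrightarrow X'$ over $Z$ ending in a Mori fiber space $X'\to Z'$ over some intermediate $Z'\to Z$, and when $f$ is already an extremal fiber-type contraction the MMP is vacuous, so no factorization is gained. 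Second, and decisively, the ``terminal Mori fiber space step'' --- which you correctly identify as the main obstacle --- is only asserted, not proved: ``combining fiberwise K-V vanishing with G-R vanishing on the source'' is precisely the statement to be established, and you give no mechanism for passing from vanishing on the (possibly very singular) special fibers to vanishing of $R^if_*\o_X$ near the bad points of $Z$.

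The paper closes exactly this gap by a different route. It first uses \cite[Theorem 3.3]{Tan18} and \cite[Corollary 1.8]{BT19} to get $R^if_*\o_X=0$ away from a finite set of closed points of $Z$. Then, for each bad point $z$, it invokes \cite[Proposition 2.15]{GNT19} to replace $X$ by a $\q$-factorial plt model $\pi\colon (Y,\Delta_Y)\to Z$ in which $S:=(\pi^{-1}(z))_{\red}$ is a component of $\rdown{\Delta_Y}$ with $-S$ $\pi$-nef and $-(K_Y+\Delta_Y)$ $\pi$-ample; since $X$ and $Y$ both have rational singularities, it suffices to prove $R^i\pi_*\o_Y=0$ near $z$. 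This is then done by Proposition~\ref{flex.KV.ind.lem}: the filtration by $\o_Y(-mS)$, adjunction to $\bar S$, and the K-V vanishing for surfaces of del Pezzo type \cite[Theorem 1.1]{ABL20} (where $p\geq 7$ enters), followed by the formal function theorem. Your proposal has nothing playing the role of the GNT19 plt model or of the $\o_Y(-mS)$-induction, and without such a device the argument does not close.
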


\begin{proof}
	It is sufficient to prove $R^if_*\o_X=0$ for $i>0$.
	By \cite[Theorem 3.3]{Tan18} and \cite[Corollary 1.8]{BT19}, there exists an open subset $U \subset Z$ such that $(R^i f_* \o_X)|_{U}=0$ and $Z \setminus U$ is a finite set of points.
	Let $z$ be a closed point in $Z \setminus U$.
	By \cite[Proposition 2.15]{GNT19},
	there exists a contraction
	$\pi \colon Y \to Z$
	and an effective $\q$-divisor $\Delta_Y$ on $Y$ such that 
	\begin{enumerate}
		\item[(i)] $(Y, \Delta_Y)$ is a $\q$-factorial plt pair (in particular, $Y$ is klt),
		\item[(ii)] $S:=(\pi^{-1}(z))_{\text{red}}$ is an irreducible component of $\lfloor{ \Delta_Y \rfloor}$,
		\item[(iii)] $-(K_Y+\Delta_Y)$ is $\pi$- ample and $-S$ is $\pi$-nef.
	\end{enumerate}
	Let us consider the following diagram
	\[
	\begin{CD}
		W @> \varphi >> Y \\
		@V \psi VV @VV \pi V\\
		X @> f >> Z,
	\end{CD}
	\]
	where $\varphi$ and $\psi$ are log resolutions.
	Since $X$ and $Y$  have rational singularities by \cite[Corollary 1.3]{ABL20}, to prove $R^i f_* \o_X=0$ in a neighbourhood of $z$ it is sufficient to prove $R^i \pi_* \o_Y=0$ for $i>0$. 
	This follows from Proposition \ref{flex.KV.ind.lem} and \cite[Theorem 1.1]{ABL20}.
\end{proof}

The lifting of birational contractions between klt pairs was treated in  \cite[Corollary 5.1]{HW19}).
Combining  Theorem~\ref{t-vanishing-higher} with 
\cite[Theorem 3.1]{CvS09} we get the following analog 
for log Fano contractions.

\begin{cor}\label{lifting}
	Let $k$ be a perfect field of characteristic $p\geq 7$.
	Let $(X, \Delta)$ be a klt threefold pair and let $f \colon X \to Z$ be a proper contraction morphism over $k$ between quasi-projective varieties.
	Assume that
	\begin{enumerate}
		\item $X$ lifts to $W_m(k)$ (resp. formally lifts to $W(k)$),
		\item $-(K_X+\Delta)$ is $f$-big and $f$-nef,
	\end{enumerate} 
	Then the morphism $f \colon X \to Z$ lifts to $W_m(k)$ (resp. formally lifts to $W(k)$). \qed
\end{cor}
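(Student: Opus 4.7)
The plan is to reduce the lifting of the morphism $f$ to a cohomological vanishing statement and then invoke the deformation-theoretic criterion of \cite[Theorem 3.1]{CvS09}. The proof is essentially a formal combination of Theorem~\ref{t-vanishing-higher} with that criterion, so no genuinely new input beyond the vanishing is needed.

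First I would dispose of the trivial case $\dim Z = 0$. Since $X$ is integral proper over $k$ and $f$ is a contraction, $Z = \spec H^0(X,\o_X)$ is the spectrum of a field over $k$, and any lift of $X$ to $W_m(k)$ (resp. a formal lift to $W(k)$) produces a compatible lift of the structural morphism to $\spec W_m(k)$ (resp. $\spec W(k)$) automatically. So I may assume $\dim Z \geq 1$.

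Under assumption (2) together with $\dim Z \geq 1$, the hypotheses of Theorem~\ref{t-vanishing-higher} are satisfied. That theorem gives $R^i f_* \o_X = 0$ for $i > 0$; combined with the equality $f_*\o_X = \o_Z$, which holds because $f$ is a contraction, this yields the isomorphism
\[
\o_Z \stackrel{\simeq}{\longrightarrow} \mathbf{R} f_* \o_X \qquad \text{in } D(Z).
\]
This is precisely the cohomological hypothesis required to apply the Cynk--van~Straten lifting criterion.

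Finally I would invoke \cite[Theorem 3.1]{CvS09} with the given $W_m(k)$-lift (resp. formal $W(k)$-lift) of $X$ and the morphism $f$: the vanishing $\mathbf{R}f_*\o_X \simeq \o_Z$ guarantees that both $Z$ and $f$ lift compatibly to $W_m(k)$ (resp. to $W(k)$), which is the desired conclusion. The only substantive step is the vanishing theorem, which is already in hand; no further obstacle is anticipated, since the lifting criterion is a purely deformation-theoretic consequence of $\mathbf{R}f_*\o_X \simeq \o_Z$.
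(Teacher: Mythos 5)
Your proposal is correct and follows exactly the paper's route: the corollary is obtained by combining Theorem~\ref{t-vanishing-higher} (giving $\o_Z \simeq \mathbf{R}f_*\o_X$) with the Cynk--van Straten lifting criterion \cite[Theorem 3.1]{CvS09}, which is precisely why the paper states it with no further proof. Your explicit treatment of the case $\dim Z = 0$ is a reasonable (and harmless) extra precaution, since Theorem~\ref{t-vanishing-higher} formally requires $\dim Z \geq 1$.
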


\subsection{Threefold conic bundles} \label{ss-rat-conic}

By \cite[Theorem 3.8]{NT20}, the  base scheme of a threefold conic bundle has $W\o$-rational singularities over perfect fields of characteristic $p \geq 7$.
The same holds for $p=5$ using 
\cite{HW195}.
We show that  the base of a threefold conic bundle has rational singularities for $p \geq 7$.

\begin{thm}
	Let $k$ be a perfect field of characteristic $p \geq 7$.
	Let $(X, \Delta)$ be a klt threefold log pair and let $\pi \colon X \to S$ be a proper contraction morphism onto a surface $S$ over $k$.
	If $-(K_X+\Delta)$ is $\pi$-big and $\pi$-nef, then $S$ has rational singularities. 
\end{thm}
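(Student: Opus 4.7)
The plan is to deduce the statement from the descent criterion for rational singularities (Proposition~\ref{rationalsing}) applied to the contraction $\pi\colon X \to S$, with the vanishing theorem for log Fano contractions (Theorem~\ref{t-vanishing-higher}) supplying the crucial isomorphism. Since having rational singularities is a local property on the base, we may freely pass to an affine open neighbourhood of any given closed point of $S$, so any quasi-projectivity assumption needed for Theorem~\ref{t-vanishing-higher} causes no difficulty.

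First I would verify the three hypotheses of Proposition~\ref{rationalsing}. Because $\pi$ is a proper contraction, i.e.\ $\pi_*\o_X = \o_S$, of the normal variety $X$, the surface $S$ is itself normal: the normalization $\tilde{S} \to S$ factorizes $\pi$ by the universal property, and comparing with $\pi_*\o_X = \o_S$ forces $\tilde{S} \to S$ to be an isomorphism. As a two-dimensional normal variety over the perfect field $k$, $S$ is automatically Cohen--Macaulay and admits a dualising complex, verifying condition (3). For condition (1), the threefold $X$ has rational singularities by \cite[Corollary 1.3]{ABL20}, exactly as already invoked in the proof of Theorem~\ref{t-vanishing-higher}.

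For the remaining condition (2), I would observe that the hypotheses of Theorem~\ref{t-vanishing-higher} coincide with those given here (klt threefold pair over a perfect field of characteristic $p \geq 7$, with $-(K_X+\Delta)$ that is $\pi$-big and $\pi$-nef, and $\dim(S) = 2 \geq 1$). Thus Theorem~\ref{t-vanishing-higher} yields the strictly stronger statement that the natural map $\o_S \to \mathbf{R}\pi_*\o_X$ is an isomorphism in $D(S)$; in particular it splits. An application of Proposition~\ref{rationalsing} then concludes the proof.

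In this sense the present theorem is essentially a formal corollary of Theorem~\ref{t-vanishing-higher}; there is no new geometric obstacle to address. The genuine hard work is concentrated in that theorem, which ultimately rests on the log del Pezzo surface vanishing \cite[Theorem 1.1]{ABL20}, and this is the true source of the characteristic hypothesis $p \geq 7$.
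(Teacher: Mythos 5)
Your proposal is correct and follows essentially the same route as the paper: the authors likewise combine the rationality of $X$ from \cite[Corollary 1.3]{ABL20}, the quasi-isomorphism $\o_S \to \mathbf{R}\pi_*\o_X$ from Theorem~\ref{t-vanishing-higher}, and the Cohen--Macaulayness of the normal surface $S$ to invoke Proposition~\ref{rationalsing}. The additional details you supply (normality of $S$ via $\pi_*\o_X=\o_S$, and the reduction to the quasi-projective local case) are correct but left implicit in the paper.
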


\begin{proof}
	By \cite[Corollary 1.3]{ABL20}, $X$ has rational singularities and $\o_S \to \textbf{R}\pi_*\o_X$ is a quasi-isomorphism by Theorem \ref{t-vanishing-higher}.
	Since $S$ is a normal surface, it is  Cohen-Macaulay and thus we conclude by Proposition \ref{rationalsing}.
\end{proof}

It would be natural to expect that $S$ has klt singularities, at least for sufficiently large $p>0$.
It is easy to see  that $S$ is smooth if the total space is smooth, see \cite[4.11.2]{Kol91}.

\subsection{Pathological examples in higher dimension} \label{ss-bad-examp}

The aim of this section is to show new examples of Mori fibre spaces whose bases have non-lc singularities in characteristic $p>0$.
The first examples where the singularities of the bases were non-klt (but still log canonical) were constructed for $p=2,3$  in \cite[Theorem 1.1]{Tan20}.
Our construction is based on the work of Yasuda on wild quotient singularities  \cite{Yas14, Yas19}.

Fix an algebraically closed field $k$ of characteristic $p>0$ and the cyclic group $C_p:= \mathbb{Z}/p\mathbb{Z}$. The Jordan normal form theorem says that every $C_p$-representation decomposes as a direct sum of indecomposable representations
$V= \bigoplus_i V_i$ where $\dim V_i\leq p$. 
Yasuda introduces the invariant
$$
D_V:=\tsum_i \tbinom{\dim V_i}{2},
$$
and proves that if  $D_V \geq 2$, then the quotient variety $X:=V/C_p$ is terminal (resp. canonical, log canonical) if and only if $D_V >p$ (resp. $D_V \geq p$, $D_V \geq p-1$).

\begin{exmp}
	Assume  that $p\geq 5$ and  let $V_3$ denote the indecomposable $C_p$-representation of dimension $3$. Then  $Y:= V_3/ C_p$ is not log canonical.

	Next consider  $(\mathbb{P}^1_k)^{p}$ with the cyclic permutation  $C_p$-action.
	Set  $X:=\bigl((\mathbb{P}^1)^{p} \times V_3\bigr)/C_p$ where the action of $C_p$ on $\bigl((\mathbb{P}^1)^{p} \times V_3\bigr)$ is the diagonal one.
	At the fixed points, in local affine charts, the action is the sum of two irreducible representations $V_p \oplus V_3$. Thus $X$ has terminal singularities by Yasuda's  theorem. 
	
	Note finally that 
	$\pic\bigl((\mathbb{P}^1)^{p} \bigr)^{C_p} \simeq \mathbb{Z}.$
	We thus conclude that the coordinate projection
	$$
	\pi: X=\bigl((\mathbb{P}^1)^{p} \times V_3\bigr)/C_p\to
	V_3/C_p=Y
	$$
	is a Mori fiber space, $X$ has 
	terminal singularities and $Y$ is not even log canonical.
	
	Note that $Y$ is a hypersurface singularity by \cite[Example 6.23]{Yas14} while $X$ is not Cohen-Macaulay by \cite{ES80}.
\end{exmp}

\bibliographystyle{amsalpha}
%\bibliography{bk-refs}

\providecommand{\bysame}{\leavevmode\hbox to3em{\hrulefill}\thinspace}
\providecommand{\MR}{\relax\ifhmode\unskip\space\fi MR }
% \MRhref is called by the amsart/book/proc definition of \MR.
\providecommand{\MRhref}[2]{%
	\href{http://www.ams.org/mathscinet-getitem?mr=#1}{#2}
}
\providecommand{\href}[2]{#2}

\end{document}